\newtheorem{theorem}{Theorem}[section]
\newtheorem{lemma}[theorem]{Lemma}
\newtheorem{prop}[theorem]{Proposition}
\newtheorem{defn}[theorem]{Definition}
\def\F{{\mathcal F}}
\def\N{\mathbb N}
\def \Z {\mathbb Z}
\def \B{\mathcal B}
\def \E{\mathbb E}
\def \R{\mathbb R}
\def \E1{\mathcal E}
\def \d {\partial}
\def \p {\prime}
\def \supp {{\rm supp}}
\def \mR {\mathbb{R}}
\def \mZ {\mathbb{Z}}
\def \mN {\mathbb{N}}
\title{One dimensional Markov random fields, Markov chains and  Topological Markov fields}
\begin{document}
\maketitle

%bhm
\noindent {\small{
\author{Nishant Chandgotia, University of British Columbia}\\
\author{Guangyue Han, University of Hong Kong}\\
\author{Brian Marcus, University of British Columbia}\\
\author{Tom Meyerovitch, University of British Columbia}\\
\author{Ronnie Pavlov, University of Denver}
}} \bigskip

\section{Introduction}

A one-dimensional Markov chain is defined by a one-sided, directional conditional independence property, and a process is Markov in the forward direction if and only if it is Markov
in the backward direction.  In two and higher dimensions, this property
is replaced with a conditional independence property that is not associated with a particular direction. This leads to the notion of a Markov random field (MRF).

Of course, the definition of MRF
%bhm
makes sense
in one dimension as well (see Section~\ref{background}),
but here the conditional independence is a two-sided property, and
this is not the same as the Markov chain property. It is well known
that any one-dimensional Markov chain is an MRF. However, the
converse is not true: there are counter-examples for non-stationary,
finite-valued processes and for stationary countable-valued
processes. The converse does hold (and this is well known) for
finite-valued stationary MRF's that either have full support or
satisfy a certain mixing condition. In this paper we show that any
one-dimensional stationary, finite-valued MRF is a Markov chain,
without any mixing condition or condition on the support.

Our proof makes use of two properties of the support $X$ of a finite-valued stationary
MRF: 1) $X$ is non-wandering (this is a property of the support of any finite-valued stationary
process) and 2) $X$ is a topological Markov field (TMF) (defined in Section~\ref{background}).
The latter is a new property that sits in between the classes of shifts of finite type and
sofic shifts, which are well-known objects of study in symbolic dynamics~\cite{LM}.
Here, we develop the TMF property in one dimension, and we will develop this
property in higher dimensions in a
future paper.

While we are mainly interested in discrete-time finite-valued stationary MRF's,
in Section~\ref{continuous} we also consider continuous-time, finite-valued
stationary MRF's, and show that these are (continuous-time) Markov chains as well.

\section{Background}
\label{background}

\subsection{Basic Probabilistic Concepts}

Except for Section~\ref{continuous} (where we consider
continuous-time processes) by a stochastic process we mean a
discrete-time, finite-valued process defined by a probability
measure $\mu$ on the measurable space
$(\Sigma^\mathbb{Z},\mathcal{B})$, where $\Sigma$ is a finite set
(the \emph{alphabet}) and $\mathcal{B}$ is the product Borel
$\sigma$-algebra, which is generated by the \emph{cylinder sets}
$$
[a_1,\ldots,a_n]_j := \{ x \in \Sigma^\mathbb{Z}~:~ x_{k+j}=a_k \mbox{ for } k = 1, \ldots, n\},
$$
for any $a_i \in \Sigma$ and $j \in \mathbb{Z}$.

Throughout this paper, we will often use the shorthand notation:
$$
\mu(a_{i_1}, \ldots, a_{i_n}) = \mu(\{x \in \Sigma^\mathbb{Z}~:~
x_{i_k}=a_{i_k} \mbox{ for } k = 1, \ldots ,n\})
$$
and similarly for conditional measure:
\begin{eqnarray*}
&\mu(a_{i_1}, \ldots, a_{i_n}~|~b_{j_1}, \ldots, b_{j_m} )&\\
=&\mu(\{x \in \Sigma^\mathbb{Z}~:~ x_{i_k}=a_{i_k}, ~ k = 1 \ldots n \}~|~\{x_{j_\ell}=b_{j_\ell}, ~ \ell = 1 \ldots m \}).&
\end{eqnarray*}
In particular,
$$
\mu(a_1, \ldots, a_n) = \mu([a_1, \ldots, a_n]_0).
$$
Also, for $x\in \Sigma^\mathbb{Z}$ and $a \leq b \in \mathbb{Z}$, we define $x_{[a,b]} = x_a
x_{a+1} \ldots x_b$.

A stochastic process is {\em stationary} if it satisfies
\begin{eqnarray*}
\mu(a_1, a_2, \ldots, a_n)=\mu(a_{j+1}, a_{j+2}, \ldots, a_{j+n})
\end{eqnarray*}
for all $j \in \Z $.

A \emph{Markov chain} is a stochastic process which satisfies the
usual Markov condition:
$$\mu(a_0, \ldots, a_n  \mid a_{-N}, \ldots, a_{-1} ) = \mu(a_0,\ldots,a_n  \mid a_{-1})$$
whenever  $\mu(a_{-N},\ldots,a_{-1}) > 0$.

A \emph{Markov random field (MRF)} is a stochastic process $\mu$ which satisfies
$$
\mu(a_0, \ldots, a_n \mid a_{-N}, \ldots, a_{-1}, a_{n+1},\ldots, a_{n+M})
$$
$$
= \mu(a_0, \ldots, a_n \mid a_{-1}, a_{n+1}),
$$
whenever  $\mu(a_{-N}, \ldots, a_{-1}, a_{n+1},\ldots, a_{n+M}) > 0$.

Note that the Markov chains that we have defined here are first
order Markov chains. Correspondingly, our MRF's are first order in a
two-sided sense.  One can consider higher order Markov chains and
MRF's, but these can be naturally recoded to first order processes, and all of our
results easily carry over to higher order processes.

%bhm
More generally, a Markov random field can be defined on an
undirected graph $\mathcal{G}=(V,E)$, where $V$ is a countable set
of vertices and $E$, the set of edges, is a set of unordered pairs
of distinct vertices. Specifically, an MRF on $\mathcal{G}$  is a
probability measure $\mu$ on $\Sigma^V$ for which $\mu([a_F] \mid
[b_{\partial F}] \cap [c_{ G}]) = \mu([a_F] \mid [b_{\partial F}])$
whenever $G$ and $F$ are finite subsets of $V$, $G \cap F =
\emptyset$ and $\mu( [b_{\partial F}] \cap [c_{ G}] ) > 0$; here,
the notation such as $[a_F]$ means a configuration of letters from
$\Sigma$ on the subset $F \subset V$, and $\partial F$ (the boundary
of $F$) denotes the set of $v \in V\setminus F$ such that
$\{u,v\}\in E$ for some $u \in F$. It is not hard to see that when
the graph is the one-dimensional integer lattice, this agrees with
the definition of MRF given above.

The following is well known.  We give a proof for completeness.

\begin{prop}
\label{MC_is_MRF}
Any Markov chain (stationary or not) is an MRF.
\end{prop}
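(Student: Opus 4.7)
The plan is to prove that every Markov chain $\mu$ satisfies the MRF condition by reducing two-sided conditioning to conditioning on the immediate neighbors. Fix a block event $A = [a_0, \ldots, a_n]_0$, a past event $B = [a_{-N}, \ldots, a_{-1}]_{-N}$, and a future event $C = [a_{n+1}, \ldots, a_{n+M}]_{n+1}$, with $\mu(B \cap C) > 0$; the goal is to show $\mu(A \mid B \cap C) = \mu(A \mid a_{-1}, a_{n+1})$.

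The first step is to record the Markov property in its shifted form: translating the defining identity by $n+1$ gives, for any positive-measure conditioning event,
$$\mu(a_{n+1}, \ldots, a_{n+M} \mid a_{-N}, \ldots, a_n) = \mu(a_{n+1}, \ldots, a_{n+M} \mid a_n).$$
Combined with the original identity, this lets me split
$$\mu(A \mid B \cap C) = \frac{\mu(A \cap C \mid B)}{\mu(C \mid B)} = \frac{\mu(A \mid B)\,\mu(C \mid A \cap B)}{\mu(C \mid B)} = \frac{\mu(A \mid a_{-1})\,\mu(C \mid a_n)}{\mu(C \mid B)},$$
and then expand the denominator by the law of total probability over the intermediate block as $\sum_{a'_0, \ldots, a'_n} \mu(a'_0, \ldots, a'_n \mid a_{-1})\, \mu(C \mid a'_n)$.

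The second step is to use the Markov property a third time in the form $\mu(C \mid a_n) = \mu(a_{n+1} \mid a_n)\, \mu(a_{n+2}, \ldots, a_{n+M} \mid a_{n+1})$. Since the second factor does not depend on $a_n$, it cancels between numerator and denominator, leaving
$$\mu(A \mid B \cap C) = \frac{\mu(A \mid a_{-1})\,\mu(a_{n+1} \mid a_n)}{\sum_{a'_0, \ldots, a'_n} \mu(a'_0, \ldots, a'_n \mid a_{-1})\,\mu(a_{n+1} \mid a'_n)}.$$
This expression is precisely what the same derivation produces for $\mu(A \mid a_{-1}, a_{n+1})$ in the $M = 1$ case, so the two conditional probabilities agree, proving the MRF identity.

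The main subtlety is bookkeeping around positive measure: every conditional probability written down must have a positive-measure denominator. The required facts, for instance $\mu(a_{-1}) > 0$, $\mu(a_n) > 0$, and $\mu(a_{-1}, a_{n+1}) > 0$, all follow from $\mu(B \cap C) > 0$ by monotonicity, but each invocation of the Markov property needs its own justification. The algebra itself is routine once this is in place; the only conceptual point is recognizing that the two-sided conditioning factors cleanly because of the one-sided (but symmetric in consequence) dependence structure of a Markov chain.
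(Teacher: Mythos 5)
Your proof is correct, and it takes a genuinely different route from the paper's on the second half of the argument. Both proofs start from the same chain-rule/ratio trick, writing the two-sided conditional probability as a quotient of one-sided ones and applying forward Markovity to kill the dependence on $a_{-N},\ldots,a_{-2}$. But the paper then disposes of $a_{n+2},\ldots,a_{n+M}$ by appealing to the fact that the time-reversal of a Markov chain is again Markov and arguing ``by symmetry,'' whereas you stay entirely within the forward Markov property: you expand the denominator $\mu(C\mid B)$ over the middle block, factor the common tail $\mu(a_{n+2},\ldots,a_{n+M}\mid a_{n+1})$ out of the numerator and of every summand of the denominator, cancel it, and recognize the surviving expression
$$
\frac{\mu(a_0,\ldots,a_n\mid a_{-1})\,\mu(a_{n+1}\mid a_n)}{\sum_{a'_0,\ldots,a'_n}\mu(a'_0,\ldots,a'_n\mid a_{-1})\,\mu(a_{n+1}\mid a'_n)}
$$
as exactly what the same computation produces when conditioning only on $a_{-1},a_{n+1}$. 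Your version buys self-containedness --- it never needs the (unproved in the paper) reversibility lemma --- at the cost of a longer computation and more zero-measure bookkeeping, e.g.\ interpreting $\mu(C\mid a'_n)=\mu(a_{n+1}\mid a'_n)\,\mu(a_{n+2},\ldots,a_{n+M}\mid a_{n+1})$ with the convention that a vanishing first factor makes the whole term zero even when the second conditional is undefined; you flag this correctly, and the required positivity facts do all follow from $\mu(B\cap C)>0$. One small wording caveat: since the proposition covers non-stationary chains, the shifted identity $\mu(a_{n+1},\ldots,a_{n+M}\mid a_{-N},\ldots,a_n)=\mu(a_{n+1},\ldots,a_{n+M}\mid a_n)$ is not obtained by ``translating'' the anchored-at-zero identity (there is no translation invariance to invoke); it is simply the Markov condition asserted at time $n+1$, which is how the standard definition --- and, implicitly, the paper's own proof --- must be read.
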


\begin{proof}

$$\mu(a_0,\ldots,a_n, a_{n+1},\ldots, a_{n+M}  ~|~   a_{-N},\ldots,a_{-1})
$$
$$
=
\mu(a_{n+1}, \ldots,  a_{n+M} ~|~   a_{-N},\ldots,a_{-1})
\mu(a_0,\ldots,a_n \mid a_{-N},\ldots,a_{-1}, a_{n+1},\ldots, a_{n+M})
$$
Thus,
$$
\mu(a_0,\ldots,a_n \mid a_{-N},\ldots,a_{-1}, a_{n+1},\ldots, a_{n+M})
$$
$$
=
\frac{ \mu(a_0,\ldots,a_n, a_{n+1},\ldots, a_{n+M} ~|~   a_{-N},\ldots,a_{-1})}
{\mu( a_{n+1},\ldots, a_{n+M}  ~|~   a_{-N},\ldots,a_{-1}) }
$$
$$
=
\frac{ \mu( a_0,\ldots,a_n , a_{n+1},\ldots, a_{n+M} ~|~   a_{-1})}
{\mu( a_{n+1},\ldots, a_{n+M} ~|~ a_{-1})}
$$
the latter by Markovity. Since the last expression does not involve $a_{-N},\ldots,a_{-2}$, we have
$$
\mu(a_0,\ldots,a_n \mid a_{-N},\ldots,a_{-1}, a_{n+1},\ldots, a_{n+M}) =
\mu(a_0,\ldots,a_n \mid a_{-1}, a_{n+1},\ldots, a_{n+M})
$$
Since the reverse of a Markov process is Markov, we then have, by symmetry,
$$
\mu(a_0,\ldots,a_n \mid a_{-1}, a_{n+1},\ldots, a_{n+M}) =
\mu(a_0,\ldots,a_n \mid a_{-1}, a_{n+1})
$$
Combining the previous two equations, we see that the MRF property holds.
\end{proof}

See~\cite[Corollary 11.33]{Georgii} for an example of a fully
supported stationary MRF on a countable alphabet that is not a
Markov chain. It is easy to construct examples of non-stationary
MRF's that are not Markov chains (see the remarks immediately
following Proposition~\ref{prop:not markov}).

\subsection{Symbolic Dynamics}
\label{symbolic}

In this section, we review concepts from symbolic dynamics.  For more details, the reader may consult~\cite[Chapters 1-4]{LM}.

Let $\Sigma^*$ denote the collection of words of finite length over a finite alphabet $\Sigma$.
For $w \in \Sigma^*$, let $|w|$ denote the length of $w$.

Let $\sigma$ denote the \emph{shift map}, which acts on a
bi-infinite sequence $x \in \Sigma^{\mathbb{Z}}$ by shifting all
symbols to the left, i.e.,
$$
(\sigma(x))_n=x_{n+1} \mbox{ for all } n.
$$
A subset $X$ of $\Sigma^{\mathbb{Z}}$ is \emph{shift-invariant} if
$\sigma(x) \in X$ for all $x \in X$. A \emph{subshift} or a
\emph{shift space} $X \subset\Sigma^\mathbb{Z}$ is a shift-invariant
set which is closed with respect to the product topology on
$\Sigma^\mathbb{Z}$. Note that $\Sigma^\mathbb{Z}$ itself is a shift
space and is known as the {\em full shift}. There is an equivalent
way of defining shift spaces by using forbidden blocks: for a subset
$\F$ of $\Sigma^*$, define
\begin{equation*}
X_{\F}=\{x \in \Sigma^{\mathbb{Z}} \;|\: (x_i x_{i+1}\ldots x_{i+j}) \notin \F \text{ for all } i \in \Z \text{ and } j \in \N \cup\{0\}\}.
\end{equation*}
Then $X$ is a shift space iff $X = X_{\F}$ for some $\F$.
% $X$
%is the full shift iff $X=\Sigma_{\varnothing}$. (\textbf{check
%here}).

The \emph{language} of a shift space $X$ is
$$
B(X)= \bigcup_{n=1}^\infty B_n(X)
$$
where
$$
B_n(X)=\{w \in \Sigma^*~:~ \exists x \in X  \mbox{ s.t. } (x_1\ldots x_n) = w \}.
$$

A \emph{sliding block code} is a continuous map $\phi$ from one
shift space $X$, with alphabet $\Sigma$, to another $Y$, with
alphabet $\Sigma'$, which commutes with the shift: $\phi \circ
\sigma = \sigma \circ \phi$. The terminology comes from the
Curtis-Lyndon-Hedlund Theorem which characterizes continuous
shift-commuting maps as generated by finite block codes, namely:
there exist $m,n$ and a map $\Phi: B_{m+n+1}(X)\longrightarrow
\Sigma'$ such that
$$(\phi(x))_i= \Phi(x_{i-m }x_{i-m+1}\ldots x_{i+n})$$
If $m=0=n$, then $\phi$ is called a \emph{1-block map}.
A \emph{conjugacy} is a bijective sliding block code, and a
\emph{factor map} or (\emph{factor code}) is a surjective sliding block code.

A\emph{ shift of finite type (SFT)} is a shift space $X = X_\F$
where $\F$ can be chosen finite. An SFT is called \emph{$k$-step} if
$k$ is the smallest positive integer such that $X = X_\F$ and $\F
\subset \Sigma^{k+1}$.  A 1-step SFT is called a \emph{topological
Markov chain (TMC)}. Note that a TMC can be characterized as the set
of all bi-infinite vertex sequences on the directed graph with
vertex set $\Sigma$ and an edge from $x$ to $y$ iff $xy \not\in\F$.
TMC's were originally defined as analogues of first-order Markov chains,
where only the transitions with strictly positive transition probability
are prescribed~\cite{pa64}.

The most famous TMC is the {\em golden mean shift} defined over the
binary alphabet by forbidding the appearance of adjacent 1's,
equivalently $X = \Sigma_{\{11\}}$.  The corresponding graph is
shown in Figure~\ref{gm}.

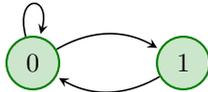
\begin{figure}
\caption{golden mean shift}
\label{gm}
\begin{center}
\begin{tikzpicture}[->,>=stealth,shorten >=1pt,auto,node distance=2cm, semithick]
        \tikzstyle{every state}=
        [%
          fill=green!50!black!20,%
          draw=green!50!black,%
          minimum size=7mm,%
          circle,%
          thick%
        ]
        \node[state] (A) {$0$};
        \node[state] (B) [right of=A] {$1$};
    \path (A) edge[bend left, above=-2pt]  (B)
          (B) edge[bend left, above=-2pt]  (A);
        \path (A) edge[loop above] (A);
\end{tikzpicture}
\end{center}
\end{figure}

%
%by a $|\Sigma| \times |\Sigma|$ matrix, whose entry indexed by $(x,
%y)$ (here $x, y \in \Sigma$) is $1$ if $xy \in \Sigma^*$, and $0$
%otherwise.

Just as MRF's and Markov chains can be recoded to first order
processes, any SFT can be recoded to a TMC; more precisely, any SFT
is conjugate to a TMC.

A shift space $X$ is \emph{non-wandering} if whenever $ u \in B(X)$, there exists a word $v$ such that $uvu \in B(X)$.
The \emph{support} of a stationary process $\mu$ on $(\Sigma^\mathbb{Z},\mathcal{B})$ is
the set
$$\supp(\mu) = \Sigma^\mathbb{Z} \setminus \bigcup_{[a]_n \in \mathcal{N}(\mu)}[a]_n,$$
where $\mathcal{N}(\mu)$ is the collection of all cylinder sets with
$\mu([a]_n)=0$. TMC's are exactly the set of shift-invariant sets
that can be the support of a (first-order) Markov chain. The reason
for our interest in the non-wandering property is that the support
of any stationary measure is non-wandering; this follows immediately
from the Poincare  Recurrence Theorem  (see~\cite{Pet}).

A shift space is {\em irreducible} if whenever $u,v \in B(X)$,
there exists $w$ such that $uwv \in B(X)$. Clearly any irreducible shift space is non-wandering.
By using the decomposition of a non-negative matrix into irreducible components, it is easy to see that
a TMC is non-wandering
iff it is the union of finitely many irreducible TMC's on disjoint
alphabets. For a subshift $X$, the period of $x \in X $ is $\min \{i \in \N\; |\;\sigma^i(x)=x\}$.
The \emph{period} of X is defined as the greatest common divisor of the periods of elements of X.
For any irreducible TMC of period $p$, one can partition $\Sigma$ into
$\Sigma_0, \Sigma_1, \ldots, \Sigma_{p-1}$ such that if $x \in X, x_0 \in  \Sigma_j$ then
$x_i \in \Sigma_{i + j \pmod p}$. The $\Sigma_i$'s are called the {\em cyclically moving subsets}.

In the full shift, any symbol can appear immediately after any other
symbol. Irreducible TMC's have a related property, described as
follows. The \emph{index of primitivity} of a TMC $X$ with period
$p$ is the smallest positive integer  $t$ such that for any $a \in
\Sigma_i, b \in \Sigma_j$, there exists  $x \in X$ such that $x_1
=a, x_{tp+j-i+1}=b$. Every irreducible TMC has a finite index of
primitivity. The terminology comes from matrix theory where, for a
primitive matrix $A$,  the index of primitivity is understood to be
the smallest positive integer such that $A^n$ is strictly positive,
entry by entry. For example, the golden mean shift is irreducible,
its period is 1 and its index of primitivity is 2.

A \emph{sofic shift} is a shift space which is a factor of an SFT.
By a recoding argument, it can be proved that any sofic shift $X$ is
a 1-block factor map of a TMC. This amounts  to saying that a shift
space $X$ is sofic iff there is a finite directed graph, whose
vertices are labelled by some finite alphabet, such that $X$ is the
set of all label sequences of bi-infinite walks on the graph. Such a
labelling is called a \emph{presentation}. The most famous sofic
shift is the even shift defined by forbidding sequences of the form
$10^{2n+1}1$ for all $n \in \mN$.  A presentation is shown in
Figure~\ref{even}.

\begin{figure}
\caption{even shift} \label{even}
\begin{center}
\begin{tikzpicture}[->,>=stealth,shorten >=1pt,auto,node distance=2cm, semithick]
        \tikzstyle{every state}=
        [%
          fill=green!50!black!20,%
          draw=green!50!black,%
          minimum size=7mm,%
          circle,%
          thick%
        ]
        \node[state] (A) {$0$};
        \node[state] (B) [right of=A] {$0$};
        \node[state] (C) [left of=A] {$1$};
    \path (A) edge[bend left, above=-2pt]  (B)
          (B) edge[bend left, above=-2pt]  (A);
        \path (C) edge[loop above] (C);
     \path (C) edge[bend left, above=-2pt] (A);
     \path (B) edge[bend left, above=-6pt] (C);
\end{tikzpicture}
\end{center}
\end{figure}
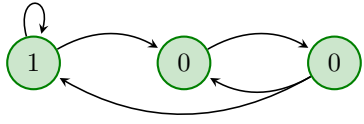

The following is one of the many useful characterizations of sofic
shifts. Let $X$ be a shift space.
 For all $w \in B(X)$, the {\em follower set} of $w$ is defined as
$$
F(w)=\{y \in B(X)~:~ wy \in B(X) \}.
$$
The collection of follower sets is denoted by
$$
F(X)=\{F(w)~:~ w \in B(X) \}.
$$
A shift space $X$ is sofic if and only if $F(X)$ is finite.
Similarly, one can define predecessor sets: $P(w)=\{y \in B(X)~:~
yw \in B(X) \}$, and $P(X)$ denotes the collection of all
predecessor sets. A shift space $X$ is sofic if and only if $P(X)$
is finite.

One can also work with follower sets of left-infinite sequences. Let
$B_\infty(X)$ denote the set of all left-infinite sequences $x^-$
for which there exists a right-infinite sequence $x^+$ such that
$x^-x^+ \in X$.  Let
$$
F_\infty(x^-)=\{y \in B(X)~:~ x^-y \in B_\infty(X) \}.
$$
Note that $F_\infty(x^-)$ is the decreasing intersection of
$\{F(x_{[-n,-1]})\}_n$.  It follows that $X$ is sofic iff the
collection of all $F_\infty(x^-)$ is finite~\cite[Lemma
2.1]{Krieger}.

Finally, we state a useful result that is probably well known, but we do
not know of an explicit reference.  So, we give a proof for
completeness.

\begin{lemma}\label{lem:sofic_non-wandering}
Let $X$ be a shift space. If $X$ has dense
periodic points, then $X$ is non-wandering.  The converse
is true if $X$ is sofic.
\end{lemma}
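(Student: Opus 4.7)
The plan is to prove the two directions separately. For the \emph{forward} direction, assume periodic points are dense in $X$ and let $u \in B(X)$. Then the cylinder $[u]_1 \cap X$ is a nonempty open subset of $X$, so it contains a periodic point $y$ of some period $p$. By periodicity $y_{[kp+1,\, kp+|u|]} = u$ for every $k \geq 0$, and choosing $k$ with $kp \geq |u|$ and setting $v := y_{[|u|+1,\, kp]}$ (the empty word if $kp = |u|$) yields $u v u = y_{[1,\, kp+|u|]} \in B(X)$.

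For the \emph{converse}, assume $X$ is sofic and non-wandering, and fix a $1$-block factor code $L \colon Y \to X$ with $Y$ a TMC on some finite alphabet $Q$. I first claim that for every $n \geq 1$ there exist $v_1, \ldots, v_{n-1}$ with
\[
w_n := u\, v_1\, u\, v_2\, u\, \cdots\, v_{n-1}\, u \;\in\; B(X).
\]
The case $n=1$ is trivial, and given $w_n$, applying non-wandering to the word $w_n$ itself gives some $v$ with $w_n v w_n \in B(X)$, a word of the same form with $2n$ copies of $u$; iterating (and taking prefixes as needed) yields $w_n$ for every $n$.

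The number of length-$|u|$ lifts of $u$ into $B(Y)$ is bounded by $|Q|^{|u|}$, so fix $n$ exceeding this bound, lift $w_n$ to $q \in B(Y)$, and decompose $q = \hat u_1 \hat v_1 \hat u_2 \cdots \hat v_{n-1} \hat u_n$ along the lengths of $u$ and the $v_i$. By pigeonhole $\hat u_i = \hat u_j$ for some $i < j$, and the word $C := \hat u_i \hat v_i \hat u_{i+1} \cdots \hat v_{j-1} \in B(Y)$ has its last symbol followed in $q$ by the first symbol of $\hat u_j = \hat u_i$, which is the first symbol of $C$. Because $Y$ is a $1$-step SFT, this single admissible transition ensures $C^m \in B(Y)$ for all $m$, so the bi-infinite periodic sequence $\ldots CCC\ldots$ lies in $Y$ and projects under $L$ to a periodic point of $X$ beginning with $u$. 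Translating produces a periodic point in every basic open set $[u]_j \cap X$, establishing density. The main subtlety is the passage from non-wandering (a hypothesis purely about the presence of words in $X$) to a genuine periodic orbit: one cannot splice copies of $u$ directly inside a sofic shift, and the pigeonhole on the finitely many lifts of $u$ into $Y$ is precisely the device that converts many repetitions of $u$ in the long word $w_n$ into a closed loop in the TMC.
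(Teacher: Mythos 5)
Your proof is correct and follows essentially the same route as the paper's: the forward direction reads $uvu$ off a periodic point in the cylinder of $u$, and the converse lifts a word containing many occurrences of $u$ through a $1$-block factor code onto a TMC, applies the pigeonhole to the finitely many lifts of $u$, and closes the resulting loop into a periodic point. The only (welcome) addition is your explicit iteration justifying that non-wandering yields words with arbitrarily many copies of $u$, which the paper asserts without detail.
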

\begin{proof}

Assume that the periodic points are dense in
$X$. Let $u \in B_n(X)$ for some $n \in \N$. Then there
exists a periodic point $x \in X$ such that $x_{[1,n]}=u$. Since
$x$ is periodic, there exists $r\in \N$ and a word $v$ such that
$x_{[1,r]}=uvu$. Thus
$uvu \in B(X)$, and so X is non-wandering.

For the converse, assume that $X$ is sofic and is non-wandering. Let
$Y$ be a TMC and $\phi:Y  \longrightarrow X$ a $1$-block factor map.
% Write
%$\phi= \Phi^{[0,0]}_{\infty}$, where $\Phi$ is a map from $B_1(Y)$
%to $B_1(X)$.

Consider a word $u \in B_n(X)$ for some $n \in \N$.
Since $X$ is non-wandering, for any $M$,
there exist $v_1 \ldots v_M \in B(X)$ such that $w = uv_1uv_2 \ldots v_Mu \in B(X)$.
Let $M>|B_n(Y)|$. Take $z\in X$ such that for some interval $I$,  $z_{I} = w$.
Then there exist $r_1 < r_2 < \ldots < r_M$ such that
$z_{[r_t, r_{t} +n-1]}= u$ for $t=1, \ldots, M$.
Let $y \in Y $ such that $\phi(y)=z$.
Since $M>|B_n(Y)|$ we can find
$h < k \in \N$ such that
\begin{equation*}
y_{[r_h,r_h+n-1]}=y_{[r_k,r_k+n-1]}.
\end{equation*}
Consider $y^\p$ defined by
\begin{eqnarray*}
y^\p_{[r_h,r_k-1]}&=&y_{[r_h,r_k - 1]}\\
\text{and }y^\p_{t}&=&y^\p_{t+r_k-r_h}
\end{eqnarray*}
for all $t \in \Z$. Clearly $y^\p$ is periodic.  And $y^\p \in   Y$ since
$Y$ is a TMC.
Also,
\begin{equation*}
\phi(y^{\p})_{{[r_h,r_h+n-1]}} =\phi(y)_{{[r_h,r_h+n-1]}}=u
\end{equation*}
Since $u \in B(X)$ was arbitrary, this proves that periodic points
are dense in $X$.
\end{proof}

\section{Topological Markov Fields}

\begin{defn}A shift space $X \subset \Sigma^\mathbb{Z}$ is a \emph{topological Markov field (TMF)}
if whenever $v,x \in \Sigma$,
$|z|=|w|$, and  $uvwxy,vzx \in B(X)$, then $uvzxy \in B(X)$.
\end{defn}

We have defined TMF's as ``1-step'' objects, in that $v,x$ are required to be
letters of the alphabet, i.e., words of length one.   One can
naturally extend the definition to ``$k$-step'' objects, by requiring
$v,x$ to be words of the same length $k$, and results of this section
extend easily to this class.

The defining property for TMF's is equivalent to another property,
which appears stronger, and is suitable for generalization to higher
dimensions.
%bhm
For this, recall that the boundary of $C$, denoted $\d C$, denotes
the set of integers in $\mZ \setminus C$ that are adjacent to an
element of $C$.

\begin{prop}
\label{general}
A shift space $X$ is a TMF if and
only if it satisfies the following condition:

~~~ For all $x,y \in X$ and finite $C \subset \mZ$ such that $x=y$ on $\d C$, the point $z \in \Sigma^\Z$ defined by
\begin{equation*}
z=\begin{cases}x \text{ on } C \cup \d C\\
y \text{ on } (C \cup \d C)^c\end{cases}
\end{equation*}
belongs to $X$.
\end{prop}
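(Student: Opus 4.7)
The plan is to handle the two directions separately. For the reverse implication (the splicing condition implies TMF), given $uvwxy, vzx \in B(X)$ with $|w|=|z|=n$, I would extend each word to a point of $X$; after shifting, arrange both extensions so that $v$ occupies position $0$ and $x$ occupies position $n+1$. With $C = [1,n]$ one has $\partial C = \{0, n+1\}$, and the two points agree on $\partial C$ by construction. Applying the splicing condition (with the extension of $vzx$ playing the role of ``$x$'' in the proposition and the extension of $uvwxy$ playing the role of ``$y$'') produces a point of $X$ reading $vzx$ on $C \cup \partial C$ and agreeing with the $uvwxy$-extension elsewhere, so the appropriate subword is $uvzxy$. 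Hence $uvzxy \in B(X)$.

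For the forward implication (TMF implies the splicing condition), my strategy is to decompose the finite set $C$ into its maximal interval components $C_1, \ldots, C_k$ and perform the substitution one component at a time. Set $z^{(0)} = y$, and for $1 \leq i \leq k$ let $z^{(i)}$ equal $x$ on $\bigcup_{j \leq i}(C_j \cup \partial C_j)$ and equal $y$ elsewhere; note that $z^{(k)} = z$ because $\bigcup_j (C_j \cup \partial C_j) = C \cup \partial C$. I would prove by induction on $i$ that $z^{(i)} \in X$, with base case $z^{(0)} = y \in X$.

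For the inductive step, $z^{(i)}$ and $z^{(i-1)}$ differ only on $C_i$, and both agree with $x$ at every point of $\partial C_i$: such a point either already lies in some $C_j \cup \partial C_j$ with $j < i$, in which case $z^{(i-1)} = x$ there by definition, or else lies in $\partial C$, in which case $x = y$ by hypothesis and again $z^{(i-1)} = x$ there. Writing $C_i = [a_i, b_i]$, for any window $[-L, L] \supseteq [a_i - 1, b_i + 1]$ the word $z^{(i-1)}_{[-L, L]} \in B(X)$ factors as $u' v' w' x'' y''$ with $v', x''$ the boundary letters and $w' = y_{[a_i, b_i]}$, while $x_{[a_i - 1, b_i + 1]} \in B(X)$ factors as $v' z' x''$ with the same $v', x''$ and $z' = x_{[a_i, b_i]}$. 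The TMF property then yields $u' v' z' x'' y'' = z^{(i)}_{[-L, L]} \in B(X)$; since $L$ is arbitrary and $X$ is closed in the product topology, $z^{(i)} \in X$. The main obstacle is verifying the compatibility of boundary values at each stage, which requires care when two consecutive maximal intervals $C_j, C_{j+1}$ are separated by a single position (so their boundaries share a point). The two-case analysis above handles this uniformly and is essentially the only delicate point in the argument.
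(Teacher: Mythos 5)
Your proof is correct and follows essentially the same route as the paper: the ``if'' direction by realizing the TMF hypothesis as a splice over a single interval $C=[1,n]$ with $\partial C=\{0,n+1\}$, and the ``only if'' direction by decomposing $C$ into maximal interval components and inducting, replacing one component at a time. The paper states this induction without details; your verification of boundary compatibility at each stage (in particular that every point of $\partial C_i$ lies in $\partial C$, where $x=y$) is exactly the content the paper leaves to the reader.
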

\begin{proof} The ``if'' direction is trivial.  For the ``only if'' direction, we use the fact that any finite subset of $\Z$ is a
disjoint union of finitely many intervals (of integers). If $C$ consists of only one interval,
then we get the condition immediately from the definition of TMF.  Now, proceed by induction on the number of intervals.
\end{proof}

The reason for our interest in TMF's is the following simple result.

\begin{lemma}\label{lem:supp_MRF_TMF}
The support of a stationary MRF is a TMF.
\end{lemma}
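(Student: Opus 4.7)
The goal is to verify the defining property of a TMF for $X = \supp(\mu)$: given $v,x \in \Sigma$, words $w,z$ with $|z|=|w|$, and $uvwxy, vzx \in B(X)$, I must deduce that $uvzxy \in B(X)$. Since $\mu$ is stationary and $X$ is its support, a word $\alpha$ lies in $B(X)$ if and only if $\mu([\alpha]_j) > 0$ for some (equivalently, every) $j \in \Z$, so the entire claim reduces to establishing positivity of a single cylinder probability.

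My plan is to invoke the MRF conditional independence with the boundary symbols $v$ and $x$ playing the roles of $a_{-1}$ and $a_{n+1}$ in the definition of MRF. After a convenient shift, I would place $v$ at coordinate $-1$, the middle block of length $|w|=|z|$ at coordinates $0,1,\ldots,|w|-1$, $x$ at coordinate $|w|$, and $u$ and $y$ sitting immediately outside on the left and right respectively. In this arrangement the MRF definition specializes to
$$\mu(\text{middle block} \mid u,v,x,y) \;=\; \mu(\text{middle block} \mid v,x),$$
valid whenever $\mu(u,v,x,y) > 0$. This positivity I would verify by marginalizing over the middle positions: $\mu(u,v,x,y) \ge \mu(u,v,w,x,y) = \mu([uvwxy]) > 0$ by the hypothesis on $uvwxy$.

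It then suffices to substitute $z$ into the middle slot. The right-hand side of the identity becomes $\mu(v,z,x)/\mu(v,x)$, which is strictly positive because $\mu([vzx]) > 0$ by the hypothesis on $vzx$. Hence $\mu(z \mid u,v,x,y) > 0$, and multiplying by $\mu(u,v,x,y) > 0$ yields $\mu([uvzxy]) > 0$, giving $uvzxy \in B(X)$ as required. I do not expect a genuine obstacle here: the only fiddly part is bookkeeping of positions relative to the MRF definition and ensuring that each conditioning event invoked has positive measure, both of which reduce to routine uses of stationarity and the marginal inequality $\mu(A \cap B) \le \mu(A)$.
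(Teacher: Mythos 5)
Your proof is correct and follows essentially the same route as the paper's: establish $\mu(u,v,x,y)>0$ from $\mu(uvwxy)>0$, apply the MRF identity to replace the conditioning on $u,v,x,y$ by conditioning on $v,x$ alone, and read off positivity of $\mu(uvzxy)$ from $\mu(vzx)>0$. The only cosmetic difference is that you phrase the bookkeeping via stationarity while the paper writes explicit cylinder indices; the argument is the same.
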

\begin{proof}
Let $\mu$ be an MRF, $X = \supp(\mu)$ and $uvwxy,vzx \in B(X)$ with
$|z|=|w|$ and $v,x \in \Sigma$. By definition of $X$,
%bhm
$\mu(uvwxy)>0$, and so  $\mu([uv]_0 \cap [xy]_{|uvw|}) > 0$. Since
$\mu$ is an MRF,
%bhm
\begin{equation}
\label{uv} \mu([z]_{|uv|} \mid [uv]_0 \cap [xy]_{|uvw|}) =
\mu([z]_{|uv|} \mid [v]_{|u|} \cap [x]_{|uvw|}).
\end{equation}
Since by the definition of $X$, $\mu(vzx)>0$, it follows that  the
right hand side of (\ref{uv}) is positive. Thus $\mu(uvzxy)>0$, and
so $uvzxy \in B(X)$ as desired.
\end{proof}
For a shift space $X$ and $w \in B(X)$, let
$$
C(w) = \{(x,y)~:~ xwy \in B(X)\} \mbox{ and } C(X) = \{C(w): w \in B(X)\}.
$$
The following is a simple restatement of the definition of TMF in terms of the sets $C(w)$:

\begin{prop}
\label{TMF_charac} A  shift space $X$ is a TMF iff for all $n \in \N$ and for all $w,u \in B_n(X)$,
\begin{equation}
\label{condition} (w_1=u_1 \mbox{ and } w_n =u_n)
\Rightarrow  C(w) = C(u).
\end{equation}
\end{prop}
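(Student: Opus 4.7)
The statement is essentially a direct reformulation of the TMF definition, so I expect both directions to be quick manipulations with no serious obstacles; the plan is mainly to identify the right substitutions in each direction.

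For the forward direction ($\Rightarrow$), assume $X$ is a TMF and fix $w, u \in B_n(X)$ with $w_1 = u_1$ and $w_n = u_n$. The cases $n = 1$ and $n = 2$ are immediate since then $w = u$. For $n \geq 3$, I would split $w = v w' x$ with $v = w_1$, $x = w_n$, $w' = w_2 \cdots w_{n-1}$, and similarly $u = v u' x$, so $|w'| = |u'| = n-2$. To show $C(w) \subseteq C(u)$, pick $(\alpha, \beta) \in C(w)$; then $\alpha v w' x \beta \in B(X)$ and $v u' x \in B(X)$, and I apply the TMF defining property with roles $(u, v, w, x, y, z) \mapsto (\alpha, v, w', x, \beta, u')$ to conclude $\alpha v u' x \beta = \alpha u \beta \in B(X)$, i.e., $(\alpha,\beta) \in C(u)$. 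The reverse inclusion follows by symmetry.

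For the backward direction ($\Leftarrow$), assume the condition on equal-length words with matching endpoints. Given $uvwxy \in B(X)$ and $vzx \in B(X)$ with $|z| = |w|$ and $v, x \in \Sigma$, I consider the two words $vwx$ and $vzx$: they have the same length, both lie in $B(X)$ (the first as a subword of $uvwxy$), and share first letter $v$ and last letter $x$. The hypothesis therefore gives $C(vwx) = C(vzx)$. Since $uvwxy \in B(X)$ means $(u,y) \in C(vwx)$, we get $(u,y) \in C(vzx)$, i.e., $uvzxy \in B(X)$, which is precisely the TMF property.

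No step looks genuinely hard; the only thing to be careful about is bookkeeping in the $n \geq 3$ case of the forward direction, ensuring the words $w'$ and $u'$ really have the same length so that the TMF definition applies as stated. Since the definition allows $w', u'$ to be words of the same positive length (and in particular $v, x$ to be single letters flanking them), the argument goes through cleanly.
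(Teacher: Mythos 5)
Your proof is correct, and it is exactly the routine substitution argument the paper has in mind: the paper states this proposition without proof, calling it ``a simple restatement of the definition,'' and your two directions (splitting $w=vw'x$, $u=vu'x$ for the forward implication, and comparing $C(vwx)$ with $C(vzx)$ for the converse) are the verification one would write down. The bookkeeping, including the trivial cases $n\le 2$, checks out.
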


\begin{prop}\label{prop:TMF_sofic}
Any TMF is sofic.
\end{prop}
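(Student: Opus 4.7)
My plan is to invoke the characterization (recalled in the excerpt) that $X$ is sofic if and only if the collection $\{F_\infty(x^-) : x^- \in B_\infty(X)\}$ is finite, and to show that for a TMF $X$ this collection has at most $|\Sigma|$ elements, indexed by the last letter $x^-_{-1}$.

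Fix $a\in\Sigma$ and let $x^-, z^- \in B_\infty(X)$ with $x^-_{-1}=z^-_{-1}=a$; the claim is that $F_\infty(x^-)=F_\infty(z^-)$. Given $y \in F_\infty(x^-)$ with a witness $\hat{x}\in X$ satisfying $\hat{x}_{(-\infty,-1]}=x^-$ and $\hat{x}_{[0,|y|-1]}=y$, I aim to produce a point $p\in X$ with $p_{(-\infty,-1]}=z^-$ and $p_{[0,|y|-1]}=y$. The tool is Proposition~\ref{general}: taking an extension $\hat{z}\in X$ of $z^-$, I splice $\hat{x}$ and $\hat{z}$ on the window $C=[0,|y|-1]$, whose boundary $\partial C=\{-1,|y|\}$ requires $\hat{x}$ and $\hat{z}$ to agree at both endpoints. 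Agreement at $-1$ is automatic since both take value $a$; the nontrivial step is arranging $\hat{z}_{|y|}=\hat{x}_{|y|}$.

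Producing such an extension $\hat{z}$ of $z^-$ amounts to showing that the letter $\hat{x}_{|y|}$ is realizable at position $|y|$ in some $X$-extension of $z^-$. I plan to establish this by induction on $|y|$: the base case $|y|=0$ is trivial (both sequences are extendable), and in the inductive step the hypothesis supplies an extension of $z^-$ compatible with the first $|y|-1$ letters of $y$, while Proposition~\ref{TMF_charac} applied to the length-$(|y|+1)$ words beginning with $a$ and ending with the prescribed letter at position $|y|$ (which share first and last letters) transfers the reachability from $x^-$'s context to $z^-$'s. Once this forward-reachability step is in hand, Proposition~\ref{general} assembles $p$, giving $y \in F_\infty(z^-)$.

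The hard part will be the inductive forward-reachability transfer: Proposition~\ref{TMF_charac} only equates $C$-sets among words of a common length sharing first and last letters, so to use it one must arrange that the length-$(|y|+1)$ words drawn from the $\hat{x}$-side and the $\hat{z}$-side do share the required matching of first letters. I expect this to be achieved by passing to longer prefixes and a compactness/pigeonhole argument on the finite alphabet $\Sigma$, combined with the freedom in choosing right-extensions afforded by the splicing form of TMF.
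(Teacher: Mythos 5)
Your overall strategy (bound the number of infinite follower sets $F_\infty(x^-)$ and invoke Krieger's characterization of soficity) is reasonable and close in spirit to the paper's, but the central claim on which your argument rests --- that for a TMF the set $F_\infty(x^-)$ depends only on the last letter $x^-_{-1}$, so that there are at most $|\Sigma|$ such sets --- is false. The paper's own example $X_{not}$ of Proposition~\ref{prop:not markov} is a counterexample: the left-infinite sequences $1^\infty$ and $0^\infty 1$ both end in the letter $1$, yet $0 \in F_\infty(1^\infty)$ (witnessed by $1^\infty 0 2^\infty \in X_{not}$), while the only right-extension of $0^\infty 1$ in $X_{not}$ is by all $1$'s, so $F_\infty(0^\infty 1)=\{1^k : k \ge 1\}$. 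Concretely, the step you flag as ``the hard part'' --- transferring forward-reachability of the letter $\hat{x}_{|y|}$ from $x^-$'s context to $z^-$'s --- cannot be carried out: Proposition~\ref{TMF_charac} only equates $C(w)$ and $C(u)$ when \emph{both} $w$ and $u$ already lie in $B_n(X)$ and share first \emph{and} last letters, so it never tells you that a given terminal letter is reachable at a given distance to the right of $z^-$; that reachability genuinely depends on more of the past than its last symbol (in the example, no extension of $0^\infty 1$ ever sees the letter $2$, while extensions of $1^\infty$ do, so already for $y=0$ your splicing via Proposition~\ref{general} has no valid $\hat z$).

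The correct count is $|\Sigma|^2$ rather than $|\Sigma|$, and it has to be extracted at a fixed finite level rather than directly for left-infinite pasts. The paper argues contrapositively: if $X$ is not sofic, choose $|\Sigma|^2+1$ left-infinite sequences with pairwise distinct sets $F_\infty$; since each $F_\infty(w^i)$ is the decreasing intersection of the sets $F((w^i)_{[-n,-1]})$, there is a single $n_0$ at which the finite follower sets $F((w^i)_{[-n_0,-1]})$ are already pairwise distinct; pigeonholing on the pair (first letter, last letter) then yields two words of length $n_0$ with the same first and last letters but different follower sets, hence different $C$-sets, contradicting Proposition~\ref{TMF_charac}. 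If you want to keep a direct approach, the statement to prove is that among words of any fixed length $n$ there are at most $|\Sigma|^2$ distinct follower sets, and then to control the nested intersections; the splicing construction is not needed.
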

\begin{proof}

Let $X \subset \Sigma^{\mathbb{Z}}$ be a shift space that is not sofic.
We will prove that $X$ is not a TMF.

Since $X$ is not sofic, there are infinitely many left-infinite
sequences with distinct follower sets.  Thus there exist distinct
left-infinite sequences $w^1, w^2, \ldots, w^{|\Sigma|^2+1}$ with
distinct follower sets. Note
\begin{equation*}
F_\infty(w^i)=\bigcap_{n \in \N} F(({w^i})_{[-n,-1]})
\end{equation*}
 and $F(({w^i})_{[-n-1,-1]})\subset F(({w^i})_{[-n,-1]})$ for all $1\leq i \leq {|\Sigma|^2+1}$ and $n\in\N$.
 If, for each $n$, the $F(({w^i})_{[-n,-1]})$ are not distinct, then
 there exist $i_1\neq i_2$ such that
 $F_{\infty}({w^{i_1}})=F_{\infty}({w^{i_2}})$,
contradicting the assumption. Therefore there exists an $n_0\in \N$
such that the $F(({w^i})_{[-n_0,-1]})$ are all distinct. Hence we
can choose $u, u^\p \in B_{n_0}(X)$ such that $u_1= u^\p_1$ and
$u_{n_0}=u^\p_{n_0}$ but $F(u)\neq F(u^\p)$. Hence there exists $b
\in B(X)$ such that exactly one of $ub$ and $u^\p b$ is an element
of $B(X)$. It follows that there exists $a \in B(X)$ such that
exactly one of $aub$ and $au^\p b$ is an element of $B(X)$.
Therefore $C(u)\neq C(u^\p)$. By Proposition~\ref{TMF_charac}, $X$
is not a TMF.

\end{proof}

It is clear that any TMC is a TMF. The following example shows that
a TMF need not be a TMC. In fact, this TMF is not even an SFT. This is an
elaboration of an example given in~\cite{Dob}. Let
\begin{equation*}
X_{not}=\{0^\infty, 0^\infty1^\infty, 1^\infty, 1^\infty02^\infty, 2^\infty\};
\end{equation*}
to clarify the notation, $1^\infty0 2^\infty$ refers to the point $x$ such that
\begin{eqnarray*}
x_i=\begin{cases}1 \text{ if } i<0\\
0 \text{ if } i=0\\
2 \text{ if } i>0\end{cases}
\end{eqnarray*}
and all its shifts.\\
\begin{prop}\label{prop:not markov}
$X_{not}$ is a TMF but not an SFT (and in particular is not a TMC).
\end{prop}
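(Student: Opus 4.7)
The plan is to first enumerate $B(X_{not})$ explicitly, then verify the TMF property via Proposition~\ref{TMF_charac}, and finally disprove the SFT property by a gluing argument. The key structural feature I will exploit is that $X_{not}$ contains two qualitatively different transition types---one from $0$'s to $1$'s, and one from $1$'s through a single $0$ to $2$'s---and these never coexist in the same point.

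First I would write down $B(X_{not})$: every word in the language has one of the forms $a^n$ for $a \in \{0,1,2\}$, $0^a 1^b$ with $a,b \geq 1$, $1^a 0 2^b$ with $a,b \geq 1$, $1^a 0$ with $a \geq 1$, or $0 2^b$ with $b \geq 1$. In particular, a word $0^a 1^b$ with $a, b \geq 1$ can only occur inside a shift of $0^\infty 1^\infty$ (since the bigram $01$ appears in no other point), and a word $1^a 0 2^b$ with $a, b \geq 1$ only inside a shift of $1^\infty 0 2^\infty$.

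To verify the TMF property I would apply Proposition~\ref{TMF_charac}: I must show that any two $w, u \in B_n(X_{not})$ with $w_1 = u_1$ and $w_n = u_n$ satisfy $C(w) = C(u)$. Partitioning length-$n$ words by the pair $(w_1, w_n)$, only the pairs $(0,1)$ and $(1,2)$ admit more than one word; all other pairs yield at most a single word, so the condition is automatic. For $(w_1, w_n) = (0,1)$ the length-$n$ words are exactly $\{0^a 1^{n-a} : 1 \leq a \leq n-1\}$, and by the structural remark above each of these has context set $\{(x,y) : x = 0^i,\ y = 1^j \text{ for some } i,j \geq 0\}$, independent of $a$. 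The case $(1,2)$ is analogous, with common context $\{(x,y) : x = 1^i,\ y = 2^j\}$.

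For the SFT failure I would invoke the standard gluing criterion for $k$-step SFTs: whenever $pv, vq \in B(X)$ with $|v| \geq k$, one has $pvq \in B(X)$. Assuming for contradiction that $X_{not}$ is a $k$-step SFT, apply this with $p = 0^N$, $v = 1^N$, $q = 0 2^N$ for any $N \geq k$; both $0^N 1^N$ and $1^N 0 2^N$ lie in $B(X_{not})$, so gluing forces $0^N 1^N 0 2^N \in B(X_{not})$. But no point of $X_{not}$ contains both a $0$-to-$1$ transition and a $1$-to-$0$-to-$2$ transition, so $0^N 1^N 0 2^N \notin B(X_{not})$, a contradiction. The main (and essentially only) obstacle is pinning down the shape of $B(X_{not})$ precisely enough---especially the structural claim that the two transition types do not mix---since once that is established, both parts of the argument are routine.
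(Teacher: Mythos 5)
Your proposal is correct and follows essentially the same route as the paper: both enumerate $B_n(X_{not})$, observe that the only pairs of distinct same-length words sharing first and last symbols are $0^k1^{n-k}$ and $1^m02^{n-m-1}$, and verify via Proposition~\ref{TMF_charac} that their context sets are $\{(0^i,1^j)\}$ and $\{(1^i,2^j)\}$ respectively; for the non-SFT part, the paper's witness $01^n,\,1^n0\in B(X_{not})$ but $01^n0\notin B(X_{not})$ is the same overlap-gluing obstruction you use with $0^N1^N$ and $1^N02^N$.
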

\begin{proof}
$X_{not}$ is not an SFT since for all $n$, $01^n, 1^n0 \in
B(X_{not})$ but $01^n0 \notin B(X_{not})$.

We will check that the restriction of any configuration on the positions $0$ and $n+1$ uniquely determines the configuration
on either $[1,n]$ or $[0,n+1]^c$.  This clearly implies condition
(\ref{condition}) and thus, by  Proposition~\ref{TMF_charac},
$X_{not}$ is a TMC.

To see this, first observe
that for $n>2$,
\begin{equation*}
B_n(X_{not})=\{0^n, 0^k1^{n-k}, 1^n, 1^m02^{n-m-1}, 1^{n-1}0,
02^{n-1},2^n\;|\; 0 < k < n, ~0 < m < n-1 \}.
\end{equation*}
Among these, the only pairs of distinct words with the same length that begin and end
with the same symbol are $0^k1^{n-k}, 0^{k^\p}1^{n-{k^\p}}$ and
$1^m02^{n-m-1}, 1^{m^\p}02^{n-{m^\p}-1}$. Now, observe that
%
%It is clear that (\ref{condition}) is satisfied for each of
%$\{0^n\}$, $\{1^n\}$, $\{1^{n-1}0\}$, $\{02^{n-1}\}$,$\{2^n\}$,
%. Furthermore, we verify that
\begin{eqnarray*}
C( 0^k1^{n-k})=&C( 0^{k^\p}1^{n-{k^\p}})&=\{(0^i,1^j)\}\\
C(1^m02^{n-m-1})=&C(1^{m^\p}02^{n-{m^\p}-1})&=\{(1^i,2^j)\}
\end{eqnarray*}
for $0 < k, k^\p<n$ and $0 < m,m^\p<n-1$.% Therefore,
%(\ref{condition}) is satisfied, and thus $X_{not}$ is a TMF.

\end{proof}

Since $X_{not}$ is countable, any strictly positive countable
probability vector defines a measure whose support is $X_{not}$. Any
such measure is an MRF because any valid configuration in $X_{not}$
on the positions $0$ and $n+1$ uniquely determines the configuration
on $[1,n]$ or $[0,n+1]^c$.
%(this can be checked directly).
Thus,
there exist non-stationary finite-valued MRF's which are not Markov
chains of any order.

Now we will prove that there is a finite procedure for checking
whether a sofic shift is a TMF. The following characterisation of
sofic shifts will be used.
\begin{prop}
\label{soficCX}
A  shift space $X$ is sofic iff $|C(X)| < \infty$.
\end{prop}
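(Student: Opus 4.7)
The plan is to use already-recalled characterisations of sofic shifts and to connect $C(X)$ to them in each direction.

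For $|C(X)| < \infty \Rightarrow X$ sofic, I would relate $C(X)$ to the collection of follower sets $F(X)$, which is already known to be finite iff $X$ is sofic. Since $X$ is a shift space, any $wy \in B(X)$ extends to the left, so $y \in F(w)$ iff there exists some $x \in B(X)$ with $(x,y) \in C(w)$. Consequently, $C(w) = C(w')$ implies $F(w) = F(w')$, so $C(w) \mapsto F(w)$ is a well-defined surjection from $C(X)$ onto $F(X)$. Hence $|F(X)| \le |C(X)| < \infty$, and $X$ is sofic.

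For the converse, I would use a finite labelled-graph presentation. Assuming $X$ is sofic, take a $1$-block factor code $\phi : Y \to X$ with $Y$ a TMC on a finite vertex set $Q$; after pruning away any vertex of $Y$ that does not lie on a bi-infinite path, I may assume $B(X)$ consists exactly of labels of finite paths in $Y$. For each $w \in B(X)$ define
$$T(w) = \{(s,t) \in Q \times Q : \text{there is a path in } Y \text{ from } s \text{ to } t \text{ whose label is } w\}.$$
The key claim is that $T(w) = T(w')$ implies $C(w) = C(w')$: a path in $Y$ carrying the label $xwy$ decomposes through some pair $(s,t) \in T(w)$ at the endpoints of its $w$-subpath, and conversely, given $(s,t) \in T(w) = T(w')$ together with a path in $Y$ ending at $s$ labelled $x$ and a path starting at $t$ labelled $y$, one may splice in a path from $s$ to $t$ labelled $w'$ to produce a path labelled $xw'y$. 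Thus $T(w) \mapsto C(w)$ is well-defined on its image, and since $T(w)$ lives in the finite set $2^{Q \times Q}$, we conclude $|C(X)| \le 2^{|Q|^2}$.

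The only delicate point is the pruning step and the verification that splicing in $Y$ corresponds to concatenation of words in $B(X)$; these are standard symbolic-dynamical manipulations, so I do not anticipate a substantial obstacle beyond the bookkeeping.
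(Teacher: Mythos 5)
Your proof is correct, and it splits into one half that matches the paper and one that does not. For the direction $|C(X)|<\infty \Rightarrow X$ sofic, your argument is essentially the paper's: $F(w)$ is recoverable from $C(w)$ because every word in a shift space extends to the left, so $C(w)\mapsto F(w)$ is a well-defined surjection onto $F(X)$ and $|F(X)|\le |C(X)|$. For the converse the paper stays intrinsic: it proves that $(x,y)\in C(w)$ iff there is a follower set $F$ with $w\in F$, $y\in F_w$ and $x\in \bigcap_{z\in F}P(z)$, so that $C(w)$ is a function of the finite data $\{(\bigcap_{z\in F}P(z),\,F_w)\;:\;F\in F(X),\ w\in F\}$, which bounds $|C(X)|$ in terms of $|F(X)|$ and $|P(X)|$. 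You instead pass to an essential vertex-labelled presentation with vertex set $Q$ and show that $C(w)$ is determined by the transition relation $T(w)\subseteq Q\times Q$, giving $|C(X)|\le 2^{|Q|^2}$; the pruning and splicing steps you defer are indeed routine once the graph is essential (every finite path then extends to a bi-infinite one, so labels of finite paths are exactly $B(X)$, and a path labelled $xwy$ meets $T(w)$ at the endpoints of its middle segment, with the $x$- and $y$-portions interacting with $w$ only through those two vertices, so the condition for $(x,y)\in C(w)$ depends on $w$ only via $T(w)$). The trade-off: the paper's version needs no choice of presentation and expresses its bound through the intrinsic quantities $|F(X)|$ and $|P(X)|$, while yours yields a more transparent, explicitly computable bound straight from a presentation --- which is precisely the kind of bound invoked later in the proof of Theorem~\ref{thm:check_TMF}.
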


\begin{proof}

``If:'' For each $F \in F(X)$, fix some $w_F \in B(X)$ such that
$F(w_F) = F$,  Now, consider the map $\Psi: F(X) \rightarrow C(X)$,
defined by $\Psi(F) = C(w_F)$.  If $F \ne F'$, then $F(w_F) \ne
F(w_{F'})$ and thus $C(w_F) \ne C(w_{F'})$.  Thus, $\Psi$ is 1-1,
and so $F(X)$ is finite.

``Only If:'' For a follower set $F \in F(X)$ and a word $w \in F$,
let $F_w = \{y: wy \in F\}$. Note that $F_w$ is a follower set. We
claim that
\begin{equation}
\label{number} C(w) = \{(x,y):   \mbox{ there exists } F \mbox{ s.t.
} w \in F, y \in F_w, \mbox{ and } x \in \cap_{z \in F} P(z) \}.
\end{equation}

To see this, first note that  if $(x,y) \in C(w)$, then $wy \in F =
F(x)$. Then $w \in F$, $y \in F_w$ and for all $z \in F = F(x)$, $xz
\in B(X)$,  and so $x \in P(z)$.

Conversely, if  $w \in F$ and $y \in F_w$, then $z = wy \in F$. If
$x \in \cap_{z \in F} P(z)$, then taking $z = wy$, we see that $xwy
\in B(X)$, and so $(x,y) \in C(w)$.  This establishes
(\ref{number}).

By (\ref{number}), we see that $C(w)$ is uniquely determined by the
set
$$
\{ (\cap_{z \in F} P(z), F_w ):~  F \mbox{ is a follower set that
contains }w \}.
$$
Thus, $|C(X)|$ is upper bounded by  the number of functions whose
domain is a subset of $F(X)$ and whose range is subset of $2^{P(X)}
\times F(X)$ and is therefore finite (here, for a given word $w$,
the domain $D$ is the collection of follower sets that contain $w$
and the function is: for $F \in D$,
%bhm
$g(F) = (\{P(z)\}_{z\in F}, F_w))$.

\end{proof}

Now we can introduce the procedure.

\begin{theorem}\label{thm:check_TMF}
There is a finite algorithm to check whether a given sofic shift
$X$ is a TMF (here, the input to the algorithm is
a labelled finite directed graph presentation of $X$).
\end{theorem}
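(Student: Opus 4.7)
My plan is to use Proposition~\ref{TMF_charac}, which reduces the TMF property to the condition that $C(w) = C(u)$ whenever $w, u \in B_n(X)$ satisfy $w_1 = u_1$ and $w_n = u_n$, for all $n \geq 2$. I will encode this infinite family of conditions as a reachability problem in a finite automaton built from the given presentation $G = (V, E, L)$ of $X$.

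For each word $w \in B(X)$, let $T(w) \subseteq V \times V$ be the set of pairs $(v, v')$ such that $w$ labels some path from $v$ to $v'$ in $G$. A direct computation gives $C(w) = \bigcup_{(v, v') \in T(w)} P_G(v) \times F_G(v')$, where $P_G(v)$ and $F_G(v')$ denote the sets of labels of finite paths ending at $v$ and starting at $v'$ respectively; hence $C(w)$ depends only on $T(w)$. The collection $\{T(w) : w \in B(X)\}$ sits inside $2^{V \times V}$ and is therefore finite, and it can be enumerated by a standard subset-construction BFS with initial states $T(a) = \{(v, v') : \text{some edge } v \to v' \text{ has label } a\}$ for $a \in \Sigma$ and the obvious letter-transitions $T \mapsto T'$.

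Next, I would build a synchronous \emph{pair automaton} on the state space $\{(T_1, T_2) : T_1, T_2 \text{ reachable}\}$, with initial states $(T(a), T(a))$ for $a \in \Sigma$ (forcing a matched first letter), and with transitions $(T_1, T_2) \to (T_1', T_2')$ indexed by letter pairs $(c_1, c_2) \in \Sigma^2$, given by coordinatewise extension. BFS this finite automaton, and at every reachable state arrived at via a diagonal transition $(c, c)$ (so the two words also share their current last letter), verify whether $C(T_1) = C(T_2)$. Declare $X$ a TMF iff every such verification succeeds. By construction, the diagonally-entered reachable states are exactly the pairs $(T(w), T(u))$ with $|w| = |u|$, $w_1 = u_1$, and $w_n = u_n$, so this procedure correctly decides the TMF property via Proposition~\ref{TMF_charac}.

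The remaining step to justify, and the one I expect to contain the main technical content, is that each comparison $C(T_1) = C(T_2)$ is decidable in finite time, since a priori each $C(T_i)$ is an infinite subset of $\Sigma^\ast \times \Sigma^\ast$. However, the formula $C(T_i) = \bigcup_{(v, v') \in T_i} P_G(v) \times F_G(v')$ writes $C(T_i)$ as a finite union of products of regular languages, so the equality reduces to finitely many regular-language equivalence checks: enumerate the finitely many reachable states $U \subseteq V$ of the backward subset DFA of $G$, and for each such $U$ compare the follower languages of $\{v' : \exists v \in U, (v, v') \in T_i\}$ for $i = 1, 2$ by equivalence in the forward subset DFA, and symmetrically on the other side. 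All these equivalences are decidable, so the overall algorithm terminates in finite time.
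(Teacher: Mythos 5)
Your proof is correct, but it takes a genuinely different route from the paper's. The paper proceeds by bounding lengths: Lemma~\ref{lemma:boundwordsize} uses a pumping argument on pairs of prefix contexts to show that any violation of condition (\ref{condition}) is witnessed by words of length at most $|C(X)|^2$, and Lemma~\ref{lemma:boundboundarysize} uses repetition of follower/predecessor sets to show that $C(w)=C(u)$ need only be tested on contexts $(x,y)$ with $|x|,|y|\le \max\{|P(X)|,|F(X)|\}$; the algorithm then just checks membership in $B(X)$ for all words up to an explicit length. You instead work directly with the presentation: the key observation that $C(w)$ depends only on the transition relation $T(w)\subseteq V\times V$, via $C(w)=\bigcup_{(v,v')\in T(w)}P_G(v)\times F_G(v')$, turns the whole problem into reachability in a finite synchronous pair automaton together with finitely many regular-language equivalence tests. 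Your approach is more explicitly automata-theoretic and arguably more directly implementable (it also yields Proposition~\ref{soficCX}-style finiteness of $C(X)$ as a byproduct, since $T(w)$ ranges over a subset of $2^{V\times V}$), whereas the paper's gives clean combinatorial length bounds in terms of $|C(X)|$, $|F(X)|$, $|P(X)|$. Two small points you should make explicit: (i) the identity $C(w)=\bigcup_{(v,v')\in T(w)}P_G(v)\times F_G(v')$ requires the presentation to be \emph{essential} (every vertex lies on a bi-infinite path), so that every finite path label actually belongs to $B(X)$; this is a standard normalization of the input graph but is needed for both inclusions. (ii) In the pair automaton you should discard states in which either component is empty, since the condition of Proposition~\ref{TMF_charac} only quantifies over $w,u\in B_n(X)$. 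Neither point affects the validity of the argument.
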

\begin{proof}

By combining the next two lemmas, we will see that
condition (\ref{condition}) can be decided by checking
words of a length bounded by an explicit function of a
presentation of
$X$.

\begin{lemma}\label{lemma:boundwordsize}
Let $n>|C(X)|^2$ and $w,u \in B_n(X)$ such that $w_1=u_1$ and
$w_n= u_n$. Then there exists $r\le |C(X)|^2$, $w^*, u^* \in
B_r(X)$ such that $w^*_1= u^*_1$, $w^*_r= u^*_r$,
$$
C(w^*) = C(w)
\mbox{ and } C(u^*) = C(u).
$$
\end{lemma}

\begin{proof}
We claim that if $C(a) = C(c)$ for some $a, c \in B(X)$ then for any $b \in B(X)$ such that $ab, cb \in B(X)$, $C(ab) =C(cb)$.
To see this, observe:
\begin{eqnarray*}
C(ab) &=& \{(x,y) : xaby \in B(x)\}\\& =& \{(x,y) : (x,by) \in C(a)\}\\& =&
\{(x,y) : (x,by) \in C(c)\}\\ &=& C(cb).
\end{eqnarray*}

Consider the set $\{(C(u'),C(w'))\}$ such that  $u'$ and $w'$ are
proper prefixes of $u$ and $w$ (respectively) and of the same length.
This set has size at most $|C(X)|^2$.  Since  $n > |C(X)|^2$, there
are distinct pairs $(u',w')$ and $(u'',w'')$, where $u'$ and $u''$ are prefixes of $u$, $w'$ and $w''$ are prefixes of $w$ such that
$$
|u'|=|w'|, \qquad |u''|=|w''|, \qquad C(u')=C(u''), \qquad C(w')=C(w'').
$$
We may assume that $|u'| < |u''|$.

Define words $h,k$ by: $u=u''h$ and $w=w''k$. Since
$$
C(u')=C(u'') \mbox{ and } C(w')=C(w''),
$$
we have
$$
C(u'h)=C(u''h) = C(u) \mbox{ and }
C(w'k)=C(w''k) = C(w).
$$
%bhm
If $u^*=u'h$ and $w^*=w'k$ have length at most $|C(X)|^2$, we are
done; if not, inductively apply the same argument to $u^*,w^*$
instead of $u,w$.
\end{proof}

%To complete the proof of the theorem, by virtue of the preceding
%lemma, it suffices to show that

%We now show that to
%decide if $C_w = C_u$ for a single pair $u,w$ of the same length $n$
%such that $w_1=u_1$ and $w_n= u_n$, one need only consider elements $(x,y)$

Let
$$
C_m(w) = \{(x,y)~:~ xwy \in B(X), |x|, |y| \le m\}.
$$

\begin{lemma}\label{lemma:boundboundarysize}
%bhm
Let $m = \max\{|P(X)|,|F(X)|\}$ and $w,u \in B(X)$ such that $|w| =
|u|$, $w_1=u_1$ and $w_{|w|}= u_{|u|}$. Then
$$
C(w) = C(u) \mbox{ iff } C_m(w) = C_m(u).
$$

\end{lemma}
\begin{proof}
Assume that
$C_m(w) = C_m(u)$.
Assume that $a w b \in B(X)$. It suffices to prove $a u b \in
B(X)$.

 If $|a|, |b|\leq m$ then there is nothing to prove. Suppose instead $|a|>m$.
 By the choice of $m$, there exists
%bhm
 $1 \le i < i^\p \le |a|$ such that
\begin{eqnarray*}
F(a_1 \ldots a_i)&=&F(a_1 \ldots a_{i^\p}).
\end{eqnarray*}
Let $a^\p= a_1 \ldots a_i a_{i^\p+1} \ldots a_{|a|}$. Then $a^\p w b
\in B(X).$ And $a u b \in B(X)$ iff $a^\p u b\in B(X)$. Since
$|a^\p|<|a|$, by induction on $|a|$, we can assume $|a^\p|\leq m$.
By a similar argument, we can find $b^\p$ such that $|b^\p|\leq m$,
$a^\p w b^\p \in B(X)$.  And $a u b \in B(X)$ iff  $a^\p u b^\p
\in B(X)$.
Since $C_m(w) = C_m(u)$,
$a^\p u b^\p \in B(X)$.  Therefore, $a u b \in B(X),$ as desired.
\end{proof}

{\em Proof of Theorem~\ref{thm:check_TMF}:} By Lemmas
\ref{lemma:boundwordsize} and \ref{lemma:boundboundarysize}, $X$ is
a TMF if and only if for all $r \le |C(X)|^2$, $w, u \in B_r(X)$
such that $w_1=u_1$ and $w_r= u_r$ and all $m =
\max\{|P(X)|,|F(X)|\}$, $C_m(w) = C_m(u)$.  This reduces to the
problem of deciding for all words $z$ of length at most $|C(X)|^2 +
2\max\{|P(X)|,|F(X)|\}$, whether $z \in B(X)$. This is accomplished
by using a labelled graph presentation of a sofic
shift~\cite[Section 3.4]{LM}; such a presentation can be used to
give a bound on $|F(X)|$, $|P(X)|$, and therefore also on $|C(X)|$
by a bound given implicitly at the end of the proof of
Proposition~\ref{soficCX}.
\end{proof}

For any sofic shift $X$, we can endow $C(X) \cup \{\star\}$, where $\star$ is
an extra symbol, with a semigroup structure:
%a well-defined semigroup
%structure on $C(X)$:
\begin{equation*}
C(w)C(u)= C(wu) \text{ if } wu \in B(X)
\text{ and } \star \text{ otherwise }
\end{equation*}
One can show that the multiplication is well-defined and formulate an algorithm
in terms of the semigroup to decide if $X$ is a TMF.
%One can use the ideas in the proof of
%Lemma~\ref{lemma:boundwordsize} to show that this is well-defined.
%
This is consistent with the spirit in which sofic shifts were originally
defined~\cite{Weiss}.

\begin{theorem}\label{thm:supp_TMF_is_SFT}
Let $X$ be a shift space. The following are equivalent.
\begin{enumerate}[(a)]
\item{\label{cond:MC} $X$ is the support of a stationary Markov chain.}
\item{\label{cond:sup_MRF} $X$ is the support of a stationary MRF.}
\item{\label{cond:TMF} $X$ is non-wandering and a TMF.}
\item{\label{cond:prod_TMF} $X \times X$ is non-wandering and $X$ is a TMF.}
\item{\label{cond:irred_TMC}$X$ is a TMC consisting of a finite union of
irreducible TMC's with disjoint alphabets.}
\end{enumerate}
\end{theorem}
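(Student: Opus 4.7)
The plan is to prove the cycle (a)$\Rightarrow$(b)$\Rightarrow$(c)$\Rightarrow$(d)$\Rightarrow$(e)$\Rightarrow$(a). Four of the arrows are routine combinations of facts already in the excerpt: (a)$\Rightarrow$(b) is Proposition~\ref{MC_is_MRF}; (b)$\Rightarrow$(c) combines Lemma~\ref{lem:supp_MRF_TMF} with the Poincar\'e recurrence remark in Section~\ref{symbolic}; (c)$\Rightarrow$(d) uses Proposition~\ref{prop:TMF_sofic} and Lemma~\ref{lem:sofic_non-wandering} to put dense periodic points into $X$, whence a pair of $X$-periodic points of common period is $X\times X$-periodic, giving dense periodic points and hence non-wandering for $X\times X$; and (e)$\Rightarrow$(a) is the standard Perron--Frobenius construction of a stationary Markov chain on each irreducible TMC component (using a positive eigenvector of its adjacency matrix), followed by a strictly positive convex combination.

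The real content is (d)$\Rightarrow$(e). Non-wandering of $X\times X$ projects to non-wandering of $X$, and any non-wandering TMC is a finite disjoint-alphabet union of irreducible TMC's (Section~\ref{symbolic}), so it suffices to show $X$ is a $1$-step SFT. I will prove by induction on $n$ that any word $y_1 y_2 \ldots y_n$ whose $2$-blocks $y_i y_{i+1}$ all lie in $B(X)$ is itself in $B(X)$; the cases $n \le 2$ are given.

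For $n \ge 3$, the inductive hypothesis gives $y_1 \ldots y_{n-1}, y_2 \ldots y_n \in B(X)$. Applying $X\times X$ non-wandering to the pair $(y_1\ldots y_{n-1},\, y_2\ldots y_n) \in B_{n-1}(X\times X)$ produces $v_1,v_2$ of common length such that
$$
W_1 := y_1\ldots y_{n-1}\, v_1\, y_1\ldots y_{n-1} \quad\text{and}\quad W_2 := y_2\ldots y_n\, v_2\, y_2\ldots y_n
$$
both lie in $B(X)$, with common length $N = 2(n-1) + |v_1|$. Let $V_1$ be $W_1$ with its first letter deleted and $V_2$ be $W_2$ with its last letter deleted; each has length $N-1$, starts with $y_2$, and ends with $y_{n-1}$. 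Proposition~\ref{TMF_charac} then gives $C(V_1) = C(V_2)$, and since $V_2 y_n = W_2 \in B(X)$ the pair $(\epsilon, y_n)$ lies in $C(V_2) = C(V_1)$, so $V_1 y_n \in B(X)$. Direct inspection shows that $y_1\ldots y_n$ appears as the final $n$ letters of $V_1 y_n$, completing the induction.

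The hard part will be seeing the right pair $(V_1, V_2)$: one must overlap the two length-$(n-1)$ windows of the target word $y_1\ldots y_n$, synchronise them via $X\times X$ non-wandering, and then shave one letter from opposite ends of the doubled forms so that the discrepancy between $V_1$ and $V_2$ is exactly the single letter $y_n$ which TMF can shuttle across. Direct applications of TMF to shorter words tend to collapse to tautologies, so the role of non-wandering in $X\times X$ is precisely to manufacture long enough matched words to make the swap nontrivial.
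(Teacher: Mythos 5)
Your proposal is correct and follows essentially the same route as the paper: the identical cycle of implications, the same appeals to Proposition~\ref{MC_is_MRF}, Lemma~\ref{lem:supp_MRF_TMF}, Poincar\'e recurrence, Proposition~\ref{prop:TMF_sofic}, Lemma~\ref{lem:sofic_non-wandering}, dense periodic points for $X\times X$, and Perron--Frobenius for (\ref{cond:irred_TMC})$\Rightarrow$(\ref{cond:MC}). Your (\ref{cond:prod_TMF})$\Rightarrow$(\ref{cond:irred_TMC}) step uses the same mechanism as the paper's --- non-wandering of $X\times X$ to synchronize two overlapping words followed by a single TMF swap --- differing only in bookkeeping: you shave one letter off each end of the doubled words and shuttle $y_n$ across via $C(V_1)=C(V_2)$, while the paper pads $uv$ and $vw$ to a common length and swaps the middles between the two copies of $v$.
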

\begin{proof}
(\ref{cond:MC}) implies (\ref{cond:sup_MRF}) by Proposition~\ref{MC_is_MRF}, and (\ref{cond:sup_MRF}) implies (\ref{cond:TMF})
follows from Lemma~\ref{lem:supp_MRF_TMF} and the fact, mentioned
above, that the support of a stationary probability measure is
non-wandering.

For (\ref{cond:TMF}) implies (\ref{cond:prod_TMF}), first observe that,
by Proposition~\ref{prop:TMF_sofic}, $X$ is sofic, and then by
Lemma~\ref{lem:sofic_non-wandering}, $X$ is non-wandering iff
$X$ has dense periodic points. Then (\ref{cond:prod_TMF})
follows since $X$ has dense periodic points iff $X\times X$ has
dense periodic points.

We now show that (\ref{cond:irred_TMC}) implies (\ref{cond:MC}).
Any irreducible TMC is the support of a stationary Markov chain,
defined by an irreducible (stochastic) probability transition matrix
$P$ such that $P_{xy} > 0$ iff
$xy \in \B_2(X)$.

Finally, we show that (\ref{cond:prod_TMF}) implies (\ref{cond:irred_TMC}).
Let $u, w \in B(X)$ and $v \in \Sigma$ such that $uv \in B(X)$
and $vw \in B(X)$. To prove that $X$ is a TMC it is sufficient to show that $uvw \in B(X)$.

There exists $\tilde{w} \in B(X)$ with $|w|=|\tilde{w}|$
and $uv\tilde{w} \in B(X)$. Also, there exists $\tilde{u}
\in B(X)$ such that $|\tilde{u}|=|u|$ and $\tilde{u}vw \in
B(X)$. Since $X \times X$ is non-wandering, there are
words $x,y \in B(X)$ with $|x|=|y|$ and
$\tilde{u}vwx\tilde{u}vw,uv\tilde{w}yuv\tilde{w} \in
B(X)$. Since $X$ is a TMF, $uvwx\tilde{u}v\tilde{w} \in
B(X)$. In particular, $uvw \in B(X)$, and so $X$
is a TMC. It is easy to see that if $X \times X$ is non-wandering,
so is $X$. Thus, $X$ is a non-wandering TMC. As mentioned in
Section~\ref{symbolic}, it follows that $X$ is a finite union of
irreducible TMC's with disjoint alphabets.
\end{proof}

We remark that in each of (\ref{cond:TMF}) and
(\ref{cond:prod_TMF}), TMF can be replaced with TMC. Also,
%bhm
as mentioned above, a shift space $X$ is nonwandering if $X \times
X$ is nonwandering. The converse is true for TMF's due to the
equivalence of (\ref{cond:TMF}) and (\ref{cond:prod_TMF}); in fact,
the converse is true more generally for sofic shifts (this follows
from Lemma~\ref{lem:sofic_non-wandering}). However, the converse is
false for shift
spaces in general. %and the equivalence of (\ref{cond:sup_MRF})
%and (\ref{cond:irred_TMC}) implies that the support of a
%stationary MRF is a TMC.

\section{Stationary MRF's are Markov chains}

\begin{theorem}
\label{MRF_MC}
Let $\mu$ be a stationary measure on $\Sigma^\mathbb{Z}$. Then
$\mu$ is an MRF iff it is a Markov chain.
\end{theorem}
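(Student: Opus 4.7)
The forward direction, that a Markov chain is an MRF, is Proposition~\ref{MC_is_MRF}, so the substantive task is the reverse implication. My first step is to leverage the structural work of the previous section. By Lemma~\ref{lem:supp_MRF_TMF} the support $X := \supp(\mu)$ is a TMF; it is also non-wandering, being the support of a stationary measure. Theorem~\ref{thm:supp_TMF_is_SFT} then tells me $X$ is a finite disjoint union $X_1 \sqcup \cdots \sqcup X_k$ of irreducible TMCs on pairwise disjoint alphabets. Because the $X_i$ are shift-invariant and their alphabets are disjoint, $\mu$ decomposes as $\mu = \sum_i \mu(X_i)\,\mu(\,\cdot\, \mid X_i)$, each summand a stationary MRF fully supported on the irreducible TMC $X_i$. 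A convex combination of stationary Markov chains over disjoint alphabets is again a stationary Markov chain, so it suffices to handle one irreducible component at a time. From now on I assume $X$ is irreducible and $\mu(w)>0$ for every $w \in B(X)$.

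The goal is to show $\mu(a_0 \mid a_{-N},\ldots,a_{-1}) = \mu(a_0 \mid a_{-1})$ for every valid word. Applying the MRF property at the single site $\{0\}$ with boundary $\{-1,1\}$ and arbitrarily large future $M$, then integrating out $a_2,\ldots,a_M$, I obtain
$$\mu(a_0 \mid a_{-N},\ldots,a_{-1},a_1) = \mu(a_0 \mid a_{-1},a_1)$$
whenever the left side is defined. Multiplying by $\mu(a_1\mid a_{-N},\ldots,a_{-1})$ and summing over $a_1$ gives
$$\mu(a_0 \mid a_{-N},\ldots,a_{-1}) = \sum_{a_1}\mu(a_0 \mid a_{-1},a_1)\,\mu(a_1\mid a_{-N},\ldots,a_{-1}),$$
so the Markov property at site $0$ reduces to showing $\mu(a_1 \mid a_{-N},\ldots,a_{-1}) = \mu(a_1 \mid a_{-1})$ for every valid $a_1$.

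This last reduction is the main obstacle: by stationarity it is the Markov property at site $0$ shifted by one site, so the naive induction is circular and MRF alone does not close it. To break the self-reference I would use the irreducible TMC structure substantively. The route I would first try is to regard $\mu$ as a Gibbs measure on $X$ with specification encoded by the nearest-neighbor function $f(a_{-1},a_0,a_1) := \mu(a_0 \mid a_{-1},a_1)$, and construct a stationary Markov chain $\nu$ on $X$ whose specification equals $f$; uniqueness of the stationary Gibbs measure on an irreducible, finite-state 1D system would then force $\mu=\nu$. Building such a $\nu$ amounts to a Hammersley--Clifford style factorization $f(a_{-1},a_0,a_1) \propto g(a_{-1},a_0)h(a_0,a_1)$, and here the finiteness of the alphabet together with the 1-step (TMC) structure of $X$ is essential. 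A direct combinatorial fallback is to iterate the MRF identity at growing contiguous blocks $\{0,\ldots,n\}$ with boundaries $\{-1,n+1\}$, and to use the TMC fact that the follower set of $a_{-1}$ depends only on $a_{-1}$ to progressively strip away the dependence of $\mu(a_1 \mid a_{-N},\ldots,a_{-1})$ on $a_{-N},\ldots,a_{-2}$; the index of primitivity of $X$ should control how many iterations suffice. Either way, the heart of the proof is leveraging the rigid combinatorial structure of an irreducible finite-alphabet TMC to dispose of the residual circularity left by MRF.
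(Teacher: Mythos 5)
Your reduction to a single irreducible TMC component is correct and is exactly how the paper begins, and you have correctly located the difficulty: after the reduction, the MRF identity alone gives only the circular relation you wrote down, and something about the structure of $X$ must be used to break it. But neither of the two routes you sketch actually closes the gap, and the core of the proof is missing. Route (a) essentially begs the question: producing a factorization $\mu(a_0\mid a_{-1},a_1)\propto g(a_{-1},a_0)h(a_0,a_1)$ compatible with a stationary Markov chain \emph{is} the content of the theorem (Hammersley--Clifford type factorizations are exactly what can fail for MRFs without positivity, and positivity on $\Sigma^{\mathbb Z}$ is precisely what you do not have --- you only have positivity on $B(X)$). Invoking ``uniqueness of the stationary Gibbs measure'' also presupposes that $\mu$ is Gibbs for a nearest-neighbour potential, which again is equivalent to what you are trying to prove. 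Route (b) is too vague to assess; ``progressively strip away the dependence'' is the whole problem restated.

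The idea you are missing is the paper's recoding to a full shift. Fix $r$ a multiple of the period $p$, let $L>r+tp$ be a multiple of $p$ (with $t$ the index of primitivity), and look at the blocks $x_{iL-r}\ldots x_{iL-1}$ anchored in a fixed cyclically moving class $\Sigma_0$. The spacing $L$ together with the index of primitivity guarantees that \emph{every} block in $B^0_r(X)$ can follow every other, so the induced block process $\mu'$ is a \emph{fully supported} stationary MRF on the full shift over the alphabet $B^0_r(X)$. The known full-support case (Georgii, Corollary 3.9) then makes $\mu'$ a Markov chain with a strictly positive stationary distribution $\pi$, so $\mu(a_{iL-r},\ldots,a_{iL-1}\mid x_{-1},\ldots,x_{-r})\to\pi(a_{-r}\ldots a_{-1})$ as $i\to\infty$, a limit that does not depend on $x_{-2},\ldots,x_{-r}$. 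Substituting this into the decomposition
\begin{equation*}
\mu(x_0\mid x_{-1},\ldots,x_{-r})=\sum \mu(x_0\mid x_{-1},a_{iL-r})\,\mu(a_{iL-r},\ldots,a_{iL-1}\mid x_{-1},\ldots,x_{-r})
\end{equation*}
(which comes from the MRF property, much like your own identity, but conditioning on a \emph{distant} block rather than the adjacent symbol $a_1$) and passing to a subsequential limit shows $\mu(x_0\mid x_{-1},\ldots,x_{-r})$ is independent of $x_{-2},\ldots,x_{-r}$. This is the step that defeats the circularity, and it is absent from your proposal; in particular, conditioning on the neighbouring site $a_1$ as you do cannot work, whereas conditioning on a far-away window whose law equilibrates is what makes the argument go through.
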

\begin{proof}

By Proposition~\ref{MC_is_MRF}, every  Markov chain is an MRF.

For the converse, let $\mu$ be an MRF.  By Theorem
\ref{thm:supp_TMF_is_SFT}, $supp(\mu)$ is a finite union of
irreducible TMC's on disjoint alphabets.  Therefore $\mu$ is a convex
combination of MRF's supported on irreducible TMC's. So, it suffices
to assume that $X= supp(\mu)$ is an irreducible TMC.

% then $\mu$ is a Markov chain;
%a convex combination of
If $X$ were a full shift,
then it would be a Markov chain by \cite[Corollary 3.9]{Georgii}.  We
will reduce to this case.

Let $p$ be the period and $t$ be the index of primitivity of $X$.
Let $\Sigma_0,\Sigma_1, \ldots, \Sigma_{p-1}$ be the
cyclically moving subsets of $X$. For $r \in \N$, a multiple of $p$, let
\begin{equation*}
B^0_r(X)=\{a_{-r}\ldots a_{-1} \in B_r(X)\; : \;a_{-r} \in \Sigma_0\}.
\end{equation*}
Fix  $x_{-r}\ldots x_{-1} \in B_{r}^0(X)$, $x_0 \in \Sigma_0$ and
$L$ a multiple of $p$ such that $L > r + tp$. Fix $i\in \N$. Then,
with the summations below taken over all values of $a_{iL-r} \ldots
a_{iL-1} \in B^0_r(X)$, we have
$$
\mu(x_0 | x_{-1}, \ldots, x_{-r}) =  \sum\mu(x_0, a_{iL-r},
\ldots, a_{iL-1}\; |\; x_{-1}, \ldots, x_{-r} )
$$
$$
= \sum \mu(x_0\;|\;x_{-1}, \ldots, x_{-r}, a_{iL-r},
\ldots, a_{iL-1}  ) \mu( a_{iL-r}, \ldots, a_{iL-1}
\;|\;x_{-1}, \ldots, x_{-r})
$$
$$
=\sum \mu(x_0\;|\; x_{-1}, a_{iL-r})\mu( a_{iL-r},
\ldots,  a_{iL-1}\;|\; x_{-1}, \ldots, x_{-r})
$$
(the last equality follows from the MRF property).

Thus,
\begin{equation}
\label{decomp}
\mu(x_0 | x_{-1}, \ldots, x_{-r}) = \sum \mu(x_0\;|\; x_{-1}, a_{iL-r})\mu( a_{iL-r} , \ldots,  a_{iL-1} \;|\; x_{-1}, \ldots, x_{-r}).
\end{equation}

Let
$$
X^0 = \{x \in X: x_0 \in \Sigma_0\}
$$
and let
\begin{equation*}
\phi: X^0\longrightarrow \{B_r^0(X)\}^\Z
\end{equation*}
be given by
\begin{equation*}
(\phi(x))_i= (x_{iL-r} \ldots x_{iL-1}).
\end{equation*}
Let $\mu^\p$ be the probability measure on $\{B_r^0(X)\}^\Z$ given
by the push-forward of the
%bhm
measure $\mu$ by $\phi$, i.e., $\mu^\p(U)= \mu(\phi^{-1}(U))$.

Observe that $\supp(\mu^\p)= \{B_r^0(X)\}^\Z$ by definition of $L$
and $t$. We show that $\mu^\p$ is a stationary MRF with alphabet
$B_r^0(X)$, as follows.   Let $x \in X_0$.  Below, we write
$x_{iL-r} \ldots x_{iL-1}$ as $x_{iL - r}^{iL - 1}$ when viewed as a
word of length $r$ and as $b_i$ when viewed as a a single symbol in
the alphabet, $B_r^0(X)$, of $\mu^\p$. Then

$$
\mu'([b_0,\ldots,b_n]_0 \mid [b_{-N},\ldots,b_{-1}]_{-N}\cap [b_{n+1},\ldots b_{n + M}]_{n+1})
$$
$$
=
\mu(x_{-r}^{-1}, \ldots,  x_{nL- r}^{nL -1} ~|~
x_{-NL-r}^{-NL -1}, \ldots,  x_{-L -r}^{-L -1},
x_{(n+1)L - r}^{(n+1)L -1},  \ldots,  x_{(n+M)L - r}^{(n+M)L-1})
$$
$$
= \mu(x_{-r}^{-1}, \ldots,  x_{nL- r}^{nL -1}   ~|~
x_{-L-1},
x_{(n+1)L-r})
$$
$$
= \mu(x_{-r}^{-1}, \ldots,  x_{nL- r}^{nL -1}    ~|~
 x_{-L-r}^{-L-1}, x_{ (n+1)L  -r}^{(n+1)L -1})
$$
$$
=
\mu'([b_0,\ldots,b_n]_0 \mid [b_{-1}]_{-1}\cap [b_{n+1}]_{n+1}).
$$
Thus,  $\mu^\p$ is a stationary MRF with full support.

By~\cite[Corollary 3.9]{Georgii}, $\mu^\p$ is a fully supported
stationary Markov chain, and thus has a positive stationary
distribution, which we denote by $\pi$.

Therefore, for any $a_{-r} \ldots a_{-1} \in B_r^0(X)$, we have
$$
\lim_{i \rightarrow \infty}\mu( a_{iL-r} = a_{-r}, \ldots, a_{iL-1}
= a_{-1} \;|\;x_{-1}, \ldots, x_{-r})
$$
$$
=\lim_{i \rightarrow \infty}\mu^\p(b_i=  a_{-r}  \ldots a_{-1} \;|\; b_0=x_{-1}\ldots x_{-r})=\pi(a_{-r} \ldots  a_{-1}).
$$

By compactness of $[0,1]$, there is a sequence $i_k$ such that
\begin{equation*}
\lim _{k \rightarrow \infty} \mu(x_0\;|\; x_{-1}, a_{i_kL-r})
\end{equation*}
exists.
Returning to (\ref{decomp}), we obtain
$$
\mu(x_0\;|\;x_{-1}, x_{-2}, \ldots, x_{-r})=\lim_{k \rightarrow \infty}\sum \mu(x_0\;|\; x_{-1}, a_{i_kL-r} )
\mu( a_{i_kL-r},  \ldots, a_{i_kL-1} \;|\;x_{-1}, x_{-2}, \ldots, x_{-r})
$$
$$
=\sum_{a_{-r} \ldots a_{-1} \in B_r^0(X)} \pi(a_{-r}
\ldots a_{-1}) \lim_{k \rightarrow \infty } \mu(x_0\;|\; x_{-1},
a_{i_kL-r}),
$$
which does not depend on any of $x_{-2}, x_{-3}, \ldots, x_{-r}$ . Therefore,
\begin{equation*}
\mu(x_0\;|\;x_{-1}, x_{-2}, \ldots, x_{-r})=\mu(x_0\;|\;x_{-1}).
\end{equation*}
Since the only restriction on $r$ is that it is a multiple of $p$, it
can be chosen arbitrarily large and so
$\mu$ is a stationary Markov chain, as desired.
\end{proof}

As noted in the course of the proof of Theorem~\ref{MRF_MC}, in the
case where the support is a full shift, the result is well known. It
can also be inferred from~\cite[Theorems 10.25, 10.35]{Georgii} in
the case where $\mu$ satisfies a mixing condition~\cite[Definition
10.23]{Georgii} (that condition is slightly stronger than the
concept of irreducibility used in our paper; the results
in~\cite{Georgii} apply to
%bhm
certain stationary processes that may be infinitely-valued).

Closely related to the notion of MRF is the notion of Gibbs measures~\cite{Preston}.
It is an old result~\cite{Preston} that any Gibbs measure defined
by a nearest-neighbour potential is an MRF. Also it is easy to see
that any Markov chain is a Gibbs measure, defined by a
nearest-neighbour potential determined by its transition
probabilities. So, in the one-dimensional (discrete-time,
finite-valued) stationary case,  MRF's, Gibbs measures and Markov
chains are all the same (where we assume that our MRF's and Markov
chains are first order and our Gibbs measures are
nearest-neighbour).

\section{Continuous-time Markov random fields and Markov Chains}
\label{continuous}

We begin with a definition of continuous-time processes. This
presentation is compatible with standard references, for example
~\cite[Section 2.2]{norris}. To avoid ambiguity and measurability
issues, we will use the following:
\begin{defn}
A \emph{continuous-time stationary process (CTSP)} taking values in
a finite set $\Sigma$ is a translation-invariant probability measure
on the space $RC(\Sigma)$ of right-continuous functions from
$\mathbb{R}$ to $\Sigma$ with the $\sigma$-algebra $\mathcal{B}$
generated by \emph{cylinder sets} of the form:
$$ [a_1,\ldots,a_n]_{t_1,\ldots,t_n} = \{ x \in RC(\Sigma) ~:~
x_{t_i}=a_i~,~ i=1,\ldots,n\},$$ where $a_i \in \Sigma$ and $t_i \in
\mathbb{Q}$.
\end{defn}

For sets $A, B \in \B$ and Borel measurable $I \subseteq \mR$, we
will use the shorthand notation $\mu(A|B \mbox{ and } x_I)$ to mean
$\mu(A|B \cap \{y \in RC(\Sigma): ~y_I = x_I\}    )$.

\begin{defn}
A  \emph{continuous-time stationary Markov random field (CTMRF)}
taking values in a finite set $\Sigma$ is a CTSP which  satisfies:
$$
\mu\left( [a_1,\ldots,a_n]_{t_1,\ldots,t_n} ~|~ [a,b]_{s,t} \right)
=
 \mu\left([a_1,\ldots,a_n]_{t_1,\ldots,t_n} ~|~
 [a,b]_{s,t} \cap [b_1,\ldots,b_m]_{s_1,\ldots,s_m} \right),
 $$
whenever \newline $\;t_1,\ldots,t_n \in(s,t)$, $s_1,\ldots,s_m \in
[s,t]^c$ and $\mu([a,b]_{s,t} \cap
[b_1,\ldots,b_m]_{s_1,\ldots,s_m}) > 0$
\end{defn}

Informally, a CTMRF can be viewed as a stationary process such that
%bhm
for all $s < t$, the distribution of $(x_u)_{u\in (s,t)}$ is
independent of $(x_u)_{u \in [s,t]^c}$ given $\{x_s,x_t\}$.

\begin{defn}
A \emph{continuous-time stationary Markov Chain (CTMC)} taking
values in a finite set  $\Sigma$ is a CTSP which satisfies
$$\mu\left( [a_1,\ldots,a_n]_{t_1,\ldots,t_n} ~|~ [a]_{s} \right) =
\mu\left([a_1,\ldots,a_n]_{t_1,\ldots,t_n} ~|~ [a]_{s} \cap
[b_1,\ldots,b_m]_{s_1,\ldots,s_m} \right),$$ whenever
$t_1,\ldots,t_n  > s $, $s_1,\ldots,s_m  < s$ and $\mu([a]_{s} \cap
[b_1,\ldots,b_m]_{s_1,\ldots,s_m}) > 0$.
\end{defn}

In this section, we prove:
\begin{prop}
\label{cts-time}Any continuous-time (finite-valued) stationary ergodic
Markov random field is a continuous-time stationary Markov chain.
\end{prop}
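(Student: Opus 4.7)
The plan is to reduce Proposition~\ref{cts-time} to the discrete-time result (Theorem~\ref{MRF_MC}) by sampling the process at evenly spaced times. For each rational $\delta>0$, let $\mu^\delta$ denote the push-forward of $\mu$ under the sampling map $x\mapsto (x_{k\delta})_{k\in\Z}$, regarded as a probability measure on $\Sigma^\Z$; translation invariance of $\mu$ immediately gives stationarity of $\mu^\delta$.

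The first main step is to verify that $\mu^\delta$ is a discrete-time stationary MRF. Apply the CTMRF property with $s=-\delta$ and $t=(n+1)\delta$: the ``interior'' times $0,\delta,\ldots,n\delta$ all lie in $(s,t)$, while any additional conditioning times $-k\delta$ and $(n+k)\delta$ for $k\geq 2$ lie in $[s,t]^c$. The resulting equality is precisely the two-sided Markov identity defining a discrete-time MRF applied to $\mu^\delta$. Theorem~\ref{MRF_MC} then implies that $\mu^\delta$ is a stationary Markov chain.

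The second step is to translate this back to $\mu$ to obtain the CTMC identity whenever all of the relevant times are rational. Given rational $s_1,\ldots,s_m<s<t_1,\ldots,t_n$, choose $\delta$ so that every one of these times lies in $\delta\Z$. Then the identity required by the CTMC definition is exactly the one-sided Markov property of the chain $\mu^\delta$ (a standard consequence of the Markov property: $\mu^\delta(Y_k,\ldots,Y_\ell\mid Y_j,Y_{j-1},\ldots)=\mu^\delta(Y_k,\ldots,Y_\ell\mid Y_j)$ for any $k>j$), and thus holds.

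The main obstacle is the final step: extending the CTMC identity from rational to arbitrary real times. Since $\Sigma$ is finite and $\mu$-a.e.\ path is right-continuous, for each $s\in\R$ and $\mu$-a.e.\ $x$ there exists $\epsilon>0$ with $x_u=x_s$ for all $u\in[s,s+\epsilon)$; hence every real-time cylinder $[a]_s$ agrees, modulo a $\mu$-null set, with the rational-time cylinder $[a]_q$ for all rational $q$ sufficiently close to $s$ from the right. Approximating $s$ and each $s_j,t_i$ from the right by rationals (taken close enough that the strict inequalities $s_j<s<t_i$ are preserved) lets us pass from the already-established rational-time identity to the general case via dominated convergence for conditional expectations. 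Ergodicity is not essential to the discretization reduction itself, mirroring the situation in Theorem~\ref{MRF_MC}, though it may be convenient for ensuring the limits in this final approximation step are well behaved.
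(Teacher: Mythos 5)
Your proposal is correct and follows essentially the same route as the paper: sample at an evenly spaced rational lattice, check that the sampled process is a discrete-time stationary MRF and hence a Markov chain by Theorem~\ref{MRF_MC}, then pass to arbitrary real times by approximating from the right and using right-continuity. One small caution: for a \emph{fixed} rational $q>s$ the cylinders $[a]_s$ and $[a]_q$ need not agree modulo a $\mu$-null set (the $\epsilon$ in your right-continuity statement depends on the path), but the weaker fact $\mu([a]_s\,\triangle\,[a]_q)\to 0$ as $q\downarrow s$ is all your dominated-convergence step requires, and is exactly what the paper's approximation uses.
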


Note that in this statement, there are no positivity assumptions on
the conditional probabilities
$\mu\left( [a_1,\ldots,a_n]_{t_1,\ldots,t_n} ~|~ [a,b]_{s,t} \right)$.

We prove this result by reducing it to the discrete-time case.

\begin{proof}
Let $\mu$ be a CTMRF.
%bhm
It suffices to show that for any $n \in \N$  and $t<t_1<t_2<\ldots <
t_n \in \R$
\begin{equation*}
\mu(x_t\;|\; x_{t_1}, x_{t_2}, \ldots, x_{t_n})= \mu(x_t\;|\; x_{t_1}).
\end{equation*}
Let $\epsilon>0$. By the right continuity of the elements of
$RC(\Sigma)$, there exists $m \in \N$ such that
\begin{eqnarray}
\label{eqn 1}|\mu(x_t\;|\; x_{t_1}, x_{t_2}, \ldots, x_{t_n})-\mu (x_{\tilde{t}}\;|\;x_{\tilde{{t}}_1}, x_{\tilde{{t}}_2}, \ldots, {{x_{\tilde{t}_n}}})|&<&\epsilon\\
\label{eqn 2}|\mu(x_t\;|\; x_{t_1} )-\mu,
(x_{\tilde{t}}\;|\;x_{\tilde{{t}}_1})|&<&\epsilon
\end{eqnarray}
where for any $r \in \mathbb{R}$, $\tilde{r}$ means $\frac{\lceil r m \rceil}{m}$.
Also, the process restricted to evenly spaced discrete points forms
an MRF; that is, defining $\phi: RC(\Sigma) \longrightarrow
\Sigma^\Z$  by $\phi(x)_i =x_{\frac{i}{m}}$, the push-forward of
$\mu$, given by $\tilde{\mu}(F)= \mu({\phi^{-1}(F)})$, is an
(discrete-time) MRF. By Theorem~\ref{MRF_MC}, $\tilde{\mu} $ is a
(discrete-time) Markov Chain. Hence
 \begin{eqnarray}
\mu (x_{\tilde{t}}\;|\;x_{\tilde{t}_1}, x_{\tilde{{t}}_2}, \ldots, {{x_{\tilde{t}_n}}})&=
& \tilde{\mu}(\phi(x)_{m\tilde{t}}\;|\; \phi(x)_{m\tilde{t}_1}, \phi(x)_{m\tilde{t}_2}, \ldots, \phi(x)_{m\tilde{t}_n})\nonumber\\
&=&\tilde{\mu}(\phi(x)_{m\tilde{t}}\;|\; \phi(x)_{m\tilde{t_1}} )\nonumber\\
&=&\mu (x_{\tilde{t}}\;|\;x_{\tilde{{t}}_1})\label{eqn 3}.
\end{eqnarray}
By (\ref{eqn 1}), (\ref{eqn 2}), (\ref{eqn 3}) and the triangle
inequality, we get
\begin{equation*}
|\mu(x_t\;|\; x_{t_1}, x_{t_2}, \ldots, x_{t_n})- \mu(x_t\;|\;
x_{t_1})|<2 \epsilon.
\end{equation*}
Since $\epsilon$ was arbitrary,
\begin{eqnarray*}
\mu(x_t\;|\; x_{t_1}, x_{t_2}, \ldots, x_{t_n})=\mu(x_t\;|\; x_{t_1}).
\end{eqnarray*}
\end{proof}

We conclude this section with an ergodic-theoretic consequence. A
{\em measure-preserving flow} is a collection of
%bhm
invertible measure preserving mappings $\{T_t\}_{t \in \mR}$ of a
%bhm
probability space such that $T_{t+s} = T_t\circ T_s$ for all $t,s$
and the map $(t,x) \mapsto T_t(x)$ is (jointly) measurable. Any CTSP
is a measure preserving flow.

A measure-preserving flow is  {\em ergodic} if the only invariant
functions are the constant functions, i.e., if for all $t$, $f\circ
T_t = f$ a.e., then $f$ is constant a.e.  And $\{T_t\}$ is {\em weak
mixing} (or {\em weakly mixing}) if it has no non-trivial eigenfunctions, i.e., if $\lambda
\in \mathbb{C}$ and for all $t$, $f\circ T_t=\lambda^tf$ a.e., then
$f$ is constant a.e.

Weak mixing is in general a much stronger condition than ergodicity.  However,
it is well known that any stationary ergodic continuous-time Markov chain is weakly
mixing \cite{KvN}.   So, a consequence of Proposition~\ref{cts-time} is:

\begin{prop}
\label{lem:CTMRF_WM} Any continuous-time (finite-valued) stationary ergodic
Markov random field is weakly mixing.
\end{prop}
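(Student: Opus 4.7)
The plan is to chain Proposition~\ref{cts-time} with the classical fact, cited from~\cite{KvN}, that every stationary ergodic continuous-time Markov chain is weakly mixing. I would start with a continuous-time (finite-valued) stationary ergodic MRF $\mu$. By Proposition~\ref{cts-time}, $\mu$ is in fact a continuous-time stationary Markov chain. The measure $\mu$ and the translation flow $\{T_t\}$ on $(RC(\Sigma),\mathcal{B})$ are unchanged when we reinterpret $\mu$ as a CTMC rather than a CTMRF, so the ergodicity hypothesis carries over unmodified. Then invoking the result of~\cite{KvN}, $\mu$ is weakly mixing, which is precisely the conclusion.

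There is no serious obstacle here; the statement is essentially a one-line corollary of Proposition~\ref{cts-time} combined with the known weak mixing result for ergodic CTMCs. The only point that requires any care is confirming that ``ergodic'' in the hypothesis (as a property of the measure-preserving flow naturally associated with the CTMRF) coincides with the notion of ergodicity used in~\cite{KvN} for CTMCs. Since both refer to the action of $\{T_t\}_{t\in\R}$ on the same probability space $(RC(\Sigma), \mathcal{B}, \mu)$, this identification is immediate, and no translation between frameworks is required.
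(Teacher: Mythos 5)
Your proposal is correct and is exactly how the paper first obtains this proposition: it is stated there as an immediate consequence of Proposition~\ref{cts-time} together with the cited fact from~\cite{KvN} that ergodic continuous-time stationary Markov chains are weakly mixing. (The paper then also supplies an independent direct proof via approximate eigenfunctions and the MRF conditioning property, but that is offered as an alternative, not as a necessary repair of the corollary argument you give.)
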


Below, we give a direct proof of this result. For $I \subset
\mathbb{R}$, we use $\mathcal{B}(\Sigma^{I})$ to denote the
$\sigma$-algebra generated by cylinder sets of the form:
$$ [a_1,\ldots,a_n]_{t_1,\ldots,t_n} = \{ x \in RC(\Sigma) ~:~
x_{t_i}=a_i~,~ i=1,\ldots,n\},$$ where $a_i \in \Sigma$ and $t_i \in
\mathbb{Q}\cap I$.

\begin{proof}
Suppose $\mu$ is an ergodic CTMRF on the alphabet $\Sigma$. Suppose that
 $f$ is a non-constant $L^2(\mu)$-eigenfunction of $T_t$, with $\lambda$ the
 corresponding eigenvalue:
$f(T_t(x))=\lambda^tf(x)$. By the assumption that $T_t$ is ergodic,
$\lambda \ne 1$. By normalizing, we can assume $|f|=1$ a.e., thus
$\|f\|_2=1$. For any $\epsilon>0$, there is a sufficiently large $n$
and a $\mathcal{B}(\Sigma^{(-n,n)})$-measurable function $f_n$ with
$\|f_n-f\|_2 \le \epsilon$.

It follows that for any $t \in \mathbb{R}$,
$\|f_n-T_t\lambda^{-t}f_n\|_2 \le 2\epsilon$. Now denote by
$\hat{f_n}$ the conditional expectation of $f_n$ with respect
to $\mathcal{B}(\Sigma^{\{-n,n\}})$. By the MRF property of $\mu$,
since $f_n$ is  $\mathcal{B}(\Sigma^{(-n,n)})$-measurable and
$T_t\lambda^{-t}f_n$ is
$\mathcal{B}(\Sigma^{(-n+t,n+t)})$-measurable it follows that for $t
> 2n$,
$$
\int (f_n) (T_t\lambda^{-t}f_n) d\mu = \int (\hat{f_n})
 (T_t \lambda^{-t}\hat{f_n}) d\mu.
$$
It follows that
\begin{equation}\label{eq:approx_eigen}
\|\hat{f_n}-T_t\lambda^{-t}\hat{f_n}\|_2 \le 2\epsilon.
\end{equation}

We claim that for sufficiently small $\epsilon$ it is impossible for
\eqref{eq:approx_eigen} to hold simultaneously for all sufficiently
large $t$.   To see this, first note that due to the MRF property, $\hat{f_n}$ takes at most
$|\Sigma|^2$ values.  Thus,  there is some $c \in \mathbb{C}$ with
$|c|=1$ such that for all $x$, $|\hat{f_n}(x)-c| >
\frac{1}{2|\Sigma|^2}$, and some $d \in \mathbb{C}$ with $|d|=1$
with $\mu(\hat{f_n}(x)=d) \ge \frac{1}{|\Sigma|^2}$. Now take $t >
2n$ such that $\lambda^td =c$.

Now for any $x$ such that $\hat{f}^n(x) = d$,  let $z =
T_t\hat{f}_n(x)$. Then
$$
|\hat{f}_n(x) - \lambda^{-t} T_t\hat{f}_n(x)| = |d - \lambda^{-t}z|
= |\lambda^t d - z| = |c - z| \ge  \frac{1}{2|\Sigma|^2},
$$
and so $\|\hat{f_n}-T_t\lambda^{-t}\hat{f_n}\|_2 \ge
\frac{1}{2|\Sigma|^3}$. This contradicts \eqref{eq:approx_eigen} for
$\epsilon < \frac{1}{4|\Sigma|^3}$
\end{proof}

\section{MRF's in Higher Dimensions}

There is a vast literature on Markov random fields and Gibbs
measures in higher dimensions. Most of the concepts in this paper
can be generalized to higher dimensional processes.  For instance,
we can define a $\Z^d$ TMF as a $\Z^d$ shift space $X$ such that
whenever $A$ and $B$ are finite subsets of $\Z^d$ such that
$\partial A \subset B \subset A^c$, and $x, y \in X$ such that
$x_{\partial A} = y_{\partial A}$, then there exists $z \in X$
such that $z_A = x_A$ and  $z_B = y_B$. With this definition,
the proof of Lemma~\ref{lem:supp_MRF_TMF} carries over to show
that the support of a $\Z^d$ stationary MRF is a $\Z^d$ TMF. However, most of
the other results in this paper fail in higher dimensions. For
example, there are $\Z^d$ TMF's which are not even sofic.  And there
are stationary $\Z^d$ MRF's that are not Gibbs measures. However,
there are some positive things that can be said, and this is a topic
of ongoing work.

\bibliographystyle{abbrv}

\end{document}